\newtheorem{theorem}{Theorem}[section]
\newtheorem{corollary}[theorem]{Corollary}
\newtheorem{definition}[theorem]{Definition}
\newtheorem{proposition}[theorem]{Proposition}
\newtheorem{remark}[theorem]{Remark}
\newtheorem{lemma}[theorem]{Lemma}
\numberwithin{theorem}{section}
\begin{document}
\thanks{2000 Mathematical Subject Classification: 46L08, 46L05, 46L55\\
This research was supported by CEEX grant-code PR-D11-PT00-48/2005 from The
Romanian Ministry of Education and Research}
\title{CROSSED\ PRODUCTS\ OF\ LOCALLY\ $C^{\ast }$-ALGEBRAS\ AND\ MORITA\
EQUIVALENCE}
\author{MARIA\ JOI\c{T}A}
\maketitle

\begin{abstract}
We introduce the notion of strong Morita equivalence for group actions on
locally $C^{\ast }$-algebras and prove that the crossed products associated
with two strongly Morita equivalent continuous inverse limit actions of a
locally compact group $G$ on the locally $C^{\ast }$-algebras $A$ and $B$
are strongly Morita equivalent. This generalizes a result of F. Combes,
Proc. London Math. Soc. 49(1984) and R. E. Curto, P.S. Muhly, D. P.
Williams, Proc. Amer. Soc. 90(1984).
\end{abstract}

\section{Introduction}

Locally $C^{\ast }$-algebras are generalizations of $C^{\ast }$-algebras.
Instead of being given by a single $C^{\ast }$-norm, the topology on a
locally $C^{\ast }$-algebra is defined by a directed family of $C^{\ast }$%
-seminorms. Such important concepts as Hilbert $C^{\ast }$-modules, Morita
equivalence, crossed products of $C^{\ast }$-algebras can be defined in the
framework of locally $C^{\ast }$-algebras and many results can be extended.
The proofs, however, are not quite as straightforward.

Phillips \cite{15} introduced the notion of inverse limit action of a
locally compact group on a metrizable locally $C^{\ast }$-algebra and he
considered the crossed products of metrizable locally $C^{\ast }$-algebras
by an inverse limit action. In \cite{8} we considered the crossed products
of locally $C^{\ast }$-algebras by inverse limit actions and using some
results about actions of Hopf $C^{\ast }$-algebras on locally $C^{\ast }$%
-algebras we proved a Takai duality theorem for crossed products of locally $%
C^{\ast }$-algebras.

The notion of strong Morita equivalence of locally $C^{\ast }$-algebras was
introduced in \cite{6}. It is well known that if $\alpha $\ and $\beta $\
are two strongly Morita equivalent actions of a locally compact group $G$\
on two $C^{\ast }$-algebras $A$\ and $B$, then the crossed products $G\times
_{\alpha }A$\ and $G\times _{\beta }B$\ are strongly Morita equivalent \cite%
{2,3}. In this work, we extend this result in the context of locally $%
C^{\ast }$-algebras.

The paper is organized as follows. In Section 2 are presented some results
about locally $C^{\ast }$-algebras \cite{4,5,6}, Hilbert modules over
locally $C^{\ast }$-algebras \cite{10,14} and crossed products of locally $%
C^{\ast }$-algebras \cite{8,15}. In Section 3, we introduce the notion of
action of a locally compact group $G$ on a Hilbert module $E$ over a locally 
$C^{\ast }$-algebra $A,$ and we show that, if $E$ is full, such an action
induces two actions of $G$ on the locally $C^{\ast }$-algebras $A$ and $K(E)$%
, the locally \ $C^{\ast }$-algebra of all compact operators on $E$.
Moreover, these actions are inverse limit actions if the action of $G$ on $E$
is an inverse limit action. The notion of strong Morita equivalence for
group actions on locally $C^{\ast }$-algebras is introduced in Section 4,
and we prove that the strong Morita equivalence of group actions on locally $%
C^{\ast }$-algebras is an equivalence relation. In Section 5, we prove that
the crossed products of two locally $C^{\ast }$-algebras by two strongly
Morita equivalent inverse limit actions are strongly Morita equivalent.

\section{Preliminaries}

A locally $C^{\ast }$-algebra is a complete Hausdorff complex topological $%
\ast $-algebra $A$ whose topology is determined by its continuous $C^{\ast }$%
-seminorms in the sense that a net $\{a_{i}\}_{i\in I}$ converges to $0$ in $%
A$ if and only if the net $\{p(a_{i})\}_{i\in I}$ converges to $0$ for all
continuous $C^{\ast }$-seminorms $p$ on $A$. The term of \textquotedblright
locally $C^{\ast }$-algebra\textquotedblright\ is due to Inoue. In the
literature, locally $C^{\ast }$-algebras have been given different names
such as $b^{\ast }$-algebras ( C. Apostol ), $LMC^{\ast }$-algebras ( G.
Lassner, K. Schm\"{u}dgen) or pro- $C^{\ast }$-algebras (N. C. Phillips).

The set $S(A)$ of all continuous $C^{\ast }$-seminorms on $A$ is directed
with the order $p\geq q$ if $p\left( a\right) \geq q\left( a\right) $ for
all $a\in A$. For each $p\in S(A),$ $\ker p=\{a\in A;p(a)=0\}$ is a
two-sided $\ast $-ideal of $A$ and the quotient $\ast $-algebra $A/\ker p$,
denoted by $A_{p}$, is a $C^{\ast }$-algebra in the $C^{\ast }$-norm induced
by $p$. The canonical map from $A$ to $A_{p}$ is denoted by $\pi _{p}^{A}$.

For $p,q\in S(A)$ with $p\geq q$ there is a canonical surjective morphism of 
$C^{\ast }$-algebras $\pi _{pq}^{A}:A_{p}\rightarrow A_{q}$ such that $\pi
_{pq}^{A}\circ \pi _{p}^{A}=\pi _{q}^{A}.$ Then $\{A_{p};\pi
_{pq}^{A}\}_{p,q\in S(A),p\geq q}$ is an inverse system of $C^{\ast }$%
-algebras and moreover, the locally $C^{\ast }$-algebras $A$ and $%
\lim\limits_{\underset{p\in S(A)}{\leftarrow }}A_{p}$ are isomorphic.

A morphism of locally $C^{\ast }$-algebras is a continuous morphism of $\ast 
$ -algebras. Two locally $C^{\ast }$-algebras $A$ and $B$ are isomorphic if
there is a bijective map $\Phi :A\rightarrow B$ such that $\Phi $ and $\Phi
^{-1}$ are morphisms of locally $C^{\ast }$-algebras.

A representation of a locally $C^{\ast }$-algebra $A$ on a Hilbert space $H$
is a continuous $\ast $-morphism $\varphi $ from $A$ to $L(H)$, the $C^{\ast
}$-algebra of all bounded linear operators on $H$. We say that the
representation $\varphi $ is non-degenerate if $\varphi \left( A\right) H$
is dense in $H.$

Hilbert modules over locally $C^{\ast }$-algebras are generalizations of
Hilbert $C^{\ast }$-modules by allowing the inner product to take values in
a locally $C^{\ast }$-algebra rather than in a $C^{\ast }$-algebra.

\begin{definition}
A pre-Hilbert$\ A$-module is a complex vector space$\ E$\ which is also a
right $A$-module, compatible with the complex algebra structure, equipped
with an $A$-valued inner product $\left\langle \cdot ,\cdot \right\rangle
:E\times E\rightarrow A\;$which is $\mathbb{C}$ -and $A$-linear in its
second variable and satisfies the following relations:

\begin{enumerate}
\item $\left\langle \xi ,\eta \right\rangle ^{\ast }=\left\langle \eta ,\xi
\right\rangle \;\;$for every $\xi ,\eta \in E;$

\item $\left\langle \xi ,\xi \right\rangle \geq 0\;\;$for every $\xi \in E;$

\item $\left\langle \xi ,\xi \right\rangle \geq 0\;\;$for every $\xi \in
E;\left\langle \xi ,\xi \right\rangle =0\;$\ if and only if $\xi =0.$
\end{enumerate}

We say that $E\;$is a Hilbert $A$-module if $E\;$is complete with respect to
the topology determined by the family of seminorms $\{\overline{p}%
_{E}\}_{p\in S(A)}$,$\;$where  $\overline{p}_{E}(\xi )=\sqrt{p\left(
\left\langle \xi ,\xi \right\rangle \right) }$, $\xi \in E$.\smallskip 
\end{definition}

Any locally $C^{\ast }$-algebra $A$ is a Hilbert $A$ -module in a natural
way.

A Hilbert $A$-module $E$ is full if the linear space $\left\langle
E,E\right\rangle \;$generated by $\{\left\langle \xi ,\eta \right\rangle
,\;\xi ,\eta \in E\}$ is dense in $A$.

Let $E\;$be a Hilbert $A$-module.\ For $p\in S(A),\;\ker \overline{p}%
_{E}=\{\xi \in E;\overline{p}_{E}(\xi )=0\}\;$is a closed submodule of $E\;$%
and $E_{p}=E/\ker \overline{p}_{E}\;$is a Hilbert $A_{p}$-module with $(\xi
+\ker \overline{p}_{E}{})\pi _{p}^{A}(a)=\xi a+\ker \overline{p}_{E}{}\;$and 
$\left\langle \xi +\ker \overline{p}_{E}{},\eta +\ker \overline{p}%
_{E}{}\right\rangle =\pi _{p}^{A}(\left\langle \xi ,\eta \right\rangle )$.\
The canonical map from $E\;$onto $E_{p}$ is denoted by $\sigma _{p}^{E}$.

For $p,q\in S(A)$ with $p\geq q$, there is a canonical surjective morphism
of vector spaces $\sigma _{pq}^{E}\;$from $E_{p}\;$onto $E_{q}\;$such that $%
\sigma _{pq}^{E}(\sigma _{p}^{E}(\xi ))=\sigma _{q}^{E}(\xi )\;$for all $\xi
\in E$.$\;$Then $\{E_{p};A_{p};\sigma _{pq}^{E},$ $\pi _{pq}^{A}\}_{p,q\in
S(A),p\geq q}$ is an inverse system of Hilbert $C^{\ast }$-modules in the
following sense: $\sigma _{pq}^{E}(\xi _{p}a_{p})=\sigma _{pq}^{E}(\xi
_{p})\pi _{pq}^{A}(a_{p}),\xi _{p}\in E_{p},a_{p}\in A_{p};$ $\left\langle
\sigma _{pq}^{E}(\xi _{p}),\sigma _{pq}^{E}(\eta _{p})\right\rangle =\pi
_{pq}^{A}(\left\langle \xi _{p},\eta _{p}\right\rangle ),\xi _{p},$ $\eta
_{p}\in E_{p};$ $\sigma _{pp}^{E}(\xi _{p})=\xi _{p},\;\xi _{p}\in E_{p}\;$%
and $\sigma _{qr}^{E}\circ \sigma _{pq}^{E}=\sigma _{pr}^{E}\;$if $p\geq
q\geq r,$ and $\lim\limits_{\underset{p\in S(A)}{\leftarrow }}E_{p}$\ is a
Hilbert $A$-module with the action defined by $\left( (\xi _{p})_{p}\right)
\left( (a_{p})_{p}\right) =(\xi _{p}a_{p})_{p}\ $and the inner product
defined by $\left\langle (\xi _{p})_{p},(\eta _{p})_{p}\right\rangle =\left(
\left\langle \xi _{p},\eta _{p}\right\rangle \right) _{p}$. Moreover, the
Hilbert $A$-module $E$ can be identified with\ $\lim\limits_{\underset{p\in
S(A)}{\leftarrow }}E_{p}$.

Let $E$ and $F$ be two Hilbert $A$-modules. A module morphism $%
T:E\rightarrow F$ is adjointable if there is a module morphism $T^{\ast
}:F\rightarrow E$ such that $\left\langle T\xi ,\eta \right\rangle
=\left\langle \xi ,T^{\ast }\eta \right\rangle $ for all $\xi \in E$ and for
all $\eta \in F$. If $T$ is an adjointable module morphism from $E$ to $F$,
then, for each $p\in S(A)$, there is a positive constant $M_{p}$ such that $%
\overline{p}_{F}\left( T\xi \right) \leq M_{p}\overline{p}_{E}\left( \xi
\right) $ for all $\xi \in E$.

The set of all adjointable module morphisms from $E$ to $F$ is denoted by $%
L(E,F)$ and we write $L(E)$ for $L(E,E)$. For $p\in S(A)$,\ since $T(\ker 
\overline{p}_{E})\subseteq \ker \overline{p}_{F}$ for all $T\in L(E,F)$,\ we
can define a linear map $(\pi _{p}^{A})_{\ast }:L(E,F)\rightarrow
L(E_{p},F_{p})$\ by%
\begin{equation*}
(\pi _{p}^{A})_{\ast }(T)(\sigma _{p}^{E}(\xi ))=\sigma _{p}^{F}(T(\xi )).
\end{equation*}
We consider on $L(E,F)$\ the topology defined by family of seminorms $\{%
\widetilde{p}_{L(E,F)}\}_{p\in S(A)}$, where%
\begin{equation*}
\widetilde{p}_{L(E,F)}(T)=\left\Vert (\pi _{p}^{A})_{\ast }(T)\right\Vert
_{L(E_{p},F_{p})}
\end{equation*}%
for all $T\in L(E,F)$. Thus topologized $L(E,F)$ becomes a complete locally
convex space and $L(E)\ $becomes a locally $C^{\ast }$-algebra.

For $p,q\in S(A)$ with $p\geq q$, consider the linear map $(\pi
_{pq}^{A})_{\ast }:L(E_{p},F_{p})\rightarrow L(E_{q},F_{q})$ defined by%
\begin{equation*}
(\pi _{pq}^{A})_{\ast }(T_{p})(\sigma _{q}^{E}(\xi ))=\sigma
_{pq}^{F}(T_{p}(\sigma _{p}^{E}(\xi )))
\end{equation*}%
$T_{p}\in L(E_{p},F_{p}),$ $\ \xi \in E$. Then $\{L(E_{p},F_{p}),(\pi
_{pq}^{A})_{\ast }\}_{p,q\in S(A),p\geq q}$ is an inverse system of Banach
spaces and the complete locally convex spaces $L(E,F)\ $and $\lim\limits_{%
\underset{p\in S(A)}{\leftarrow }}L(E_{p},F_{p})$ can be identified.
Moreover, the locally $C^{\ast }$-algebras $L(E)\ $and $\lim\limits_{%
\underset{p\in S(A)}{\leftarrow }}L(E_{p})$ can be identified.

For $\xi \in E$\ and $\eta \in F$\ we consider the rank one homomorphism $%
\theta _{\eta ,\xi }$\ from $E$\ into $F$\ defined by $\theta _{\eta ,\xi
}(\zeta )=\eta \left\langle \xi ,\zeta \right\rangle $.\ Clearly, $\theta
_{\eta ,\xi }\in L(E,F)$\ and $\theta _{\eta ,\xi }^{\ast }=\theta _{\xi
,\eta }$. \smallskip The linear subspace of $L(E,F)$ spanned by $\{\theta
_{\eta ,\xi };\xi \in E,\eta $ $\in $ $F\}$ is denoted by $\Theta (E,F)$,
and the closure of $\Theta (E,F)$ in $L(E,F)$ is denoted by $K\left(
E,F\right) .$ We write $K(E)$ for $K(E,E)$. Moreover, $K(E,F)$ may be
identified with $\lim\limits_{\underset{p\in S(A)}{\leftarrow }%
}K(E_{p},F_{p})$.

We say that the Hilbert $A$-modules $E$ and $F$ are unitarily equivalent if
there is a unitary element $U$ in $L(E,F)$ (that is, $U^{*}U=$id$_{E}$ and $%
UU^{*}=$id$_{F}$).

Let $G$ be a locally compact group and let $A$ be a locally $C^{\ast }$%
-algebra. The vector space of all continuous functions from $G$ to $A$ with
compact support is denoted by $C_{c}(G,A)$.

\begin{lemma}
(\cite[Lemma 3.7]{8}] ) Let $f\in C_{c}(G,A)$. Then there is a unique
element $\int\limits_{G}f(s)ds$\ in $A$\ such that for any non-degenerate $%
\ast $-representation $(\varphi ,H_{\varphi })$\ of $A$\ 
\begin{equation*}
\left\langle \varphi (\int\limits_{G}f(s)ds)\xi ,\eta \right\rangle
=\int\limits_{G}\left\langle \varphi (f(s))\xi ,\eta \right\rangle ds
\end{equation*}%
for all $\xi ,\eta $\ in $H_{\varphi }$. Moreover, we have:

$\;\;\;$(1) $p(\int\limits_{G}f(s)ds)\leq M_{f}\sup \{p(f(s));$\ $s\in $supp$%
\left( f\right) \}$\ for some positive constant $M_{f}$\ and for all $p\in
S(A)$;

(2) $(\int\limits_{G}f(s)ds)a=\int\limits_{G}f(s)ads$\ for all $a$\ $\in A$;

(3) $\Phi (\int\limits_{G}f(s)ds)=\int\limits_{G}\Phi \left( f(s)\right) ds$
\ for any morphism of locally $C^{\ast }$-algebras $\Phi :A\rightarrow B$;

(4) $(\int\limits_{G}f(s)ds)^{\ast }=\int\limits_{G}f(s)^{\ast }ds$.
\end{lemma}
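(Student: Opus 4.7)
My plan is to construct the integral componentwise in the inverse system $\{A_p;\pi_{pq}^A\}$ and then verify the defining property. For each $p\in S(A)$, the map $f_p:=\pi_p^A\circ f$ lies in $C_c(G,A_p)$, so the classical Bochner integral $I_p:=\int_G f_p(s)\,ds\in A_p$ is well defined, with the standard estimate $\|I_p\|_{A_p}\le\lambda(\mathrm{supp}(f))\sup_s\|f_p(s)\|_{A_p}=\lambda(\mathrm{supp}(f))\sup_s p(f(s))$, where $\lambda$ is a left Haar measure. For $p\ge q$, the morphism $\pi_{pq}^A:A_p\to A_q$ is bounded and satisfies $\pi_{pq}^A\circ f_p=f_q$; since bounded linear maps commute with Bochner integrals, $\pi_{pq}^A(I_p)=I_q$. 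Hence $(I_p)_p$ is coherent and defines an element of $\lim_{\leftarrow}A_p\cong A$, which I denote by $\int_G f(s)\,ds$.

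To check the characterizing property, I use the fact that any continuous $*$-representation $\varphi:A\to L(H)$ factors through some $A_p$: because $L(H)$ is a $C^*$-algebra, continuity forces the existence of $p\in S(A)$ and a representation $\varphi_p:A_p\to L(H)$ with $\varphi=\varphi_p\circ\pi_p^A$. Then
\begin{equation*}
\varphi\!\left(\int_G f(s)\,ds\right)=\varphi_p(I_p)=\varphi_p\!\left(\int_G f_p(s)\,ds\right)=\int_G\varphi_p(f_p(s))\,ds=\int_G\varphi(f(s))\,ds,
\end{equation*}
the third equality being the defining weak property of the Bochner integral in the $C^*$-algebra setting. Pairing against $\xi,\eta\in H_\varphi$ gives the stated formula. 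Uniqueness follows because $A$ separates elements through the maps $\pi_p^A$ and each $A_p$ admits a faithful non-degenerate representation, so the above identity with $\varphi$ ranging over enough non-degenerate representations determines $\int_G f(s)\,ds$ uniquely.

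For the four itemized properties, I argue at each quotient level $A_p$ and then reassemble. Property (1) is exactly the $\|I_p\|_{A_p}$-estimate above, with $M_f=\lambda(\mathrm{supp}(f))$. Property (2): since $\pi_p^A$ is a morphism, $\pi_p^A((\int f)a)=I_p\pi_p^A(a)$; in the $C^*$-algebra $A_p$ one has $I_p\pi_p^A(a)=\int_G f_p(s)\pi_p^A(a)\,ds=\pi_p^A\!\left(\int_G f(\cdot)a\,ds\right)$ by standard properties of the Bochner integral and continuity of right multiplication by $\pi_p^A(a)$; equality in each $A_p$ lifts to equality in $A$. For (3), continuity of $\Phi:A\to B$ gives, for every $q\in S(B)$, a seminorm $p\in S(A)$ with $p\ge q\circ\Phi$ and an induced $*$-morphism $\Phi_{qp}:A_p\to B_q$ satisfying $\pi_q^B\circ\Phi=\Phi_{qp}\circ\pi_p^A$; applying $\pi_q^B$ to both sides of (3) reduces it to the fact that $C^*$-morphisms commute with Bochner integrals. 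Property (4) reduces similarly to $(\int_G f_p)^*=\int_G f_p^*$ in $A_p$, using continuity of the involution.

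\textbf{Main obstacle.} The only non-trivial technical point is knowing that continuous $*$-representations of $A$ on Hilbert spaces factor through some $A_p$, which is what lets the characterizing property in the statement make contact with Bochner integration in a single $C^*$-algebra. Once this is in hand, everything else is a routine inverse-limit assembly of standard facts from the $C^*$-algebraic theory of vector-valued integration.
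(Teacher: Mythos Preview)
The paper does not prove this lemma; it is stated as a citation of \cite[Lemma 3.7]{8} and used as a preliminary fact, so there is no proof in the paper to compare against. Your argument is correct and is the natural one: construct $\int_G f(s)\,ds$ by Bochner-integrating $\pi_p^A\circ f$ in each $C^\ast$-quotient $A_p$, check coherence under the connecting maps $\pi_{pq}^A$ (which, being $\ast$-homomorphisms between $C^\ast$-algebras, are contractive and hence commute with Bochner integrals), and assemble in the inverse limit. The factorization of any continuous $\ast$-representation $\varphi:A\to L(H)$ through some $A_p$ is indeed the key point, and it holds because continuity into the $C^\ast$-algebra $L(H)$ forces $\ker p\subseteq\ker\varphi$ for some $p$, after which the induced map $A_p\to L(H)$ is automatically contractive. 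Your uniqueness argument---lifting faithful non-degenerate representations of each $A_p$ along the surjection $\pi_p^A$---is also sound, since surjectivity preserves non-degeneracy. Properties (1)--(4) reduce correctly to their Banach-space Bochner analogues at each level $p$.
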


\smallskip An action of $G$\ on $A$\ is a morphism of groups $\alpha $ from $%
G$\ to Aut$\left( A\right) $, the set of all isomorphisms of locally $%
C^{\ast }$-algebras from $A$\ to $A$. The action $\alpha $\ is continuous if
the function $t\rightarrow \alpha _{t}(a)$\ from $G$\ to $A$\ is continuous
for each $a\in A$.\smallskip

An action $\alpha $\ of $G$ on $A$ is an inverse limit action if we can
write $A$\ as inverse limit $\lim\limits_{\underset{\lambda \in \Lambda }{%
\leftarrow }}A_{\lambda }$ of $C^{\ast }$-algebras in such a way that there
are actions $\alpha ^{\lambda }$\ of $G$\ on $A_{\lambda },$ $\lambda \in
\Lambda $ \ such that $\alpha _{t}=\lim\limits_{\underset{\lambda \in
\Lambda }{\leftarrow }}\alpha _{t}^{\lambda }$\ for all $t$\ in $G$\ \cite[%
Definition 5.1]{15}.

\smallskip The action $\alpha $\ of $G$\ on $A$\ is a continuous inverse
limit action if there is a cofinal subset of $G$-invariant continuous $%
C^{\ast }$-seminorms on $A$\ ( a continuous $C^{\ast }$-seminorm $p$\ on $A$%
\ is $G$\ -invariant if $p(\alpha _{t}(a))=p(a)$\ for all $a$\ in $A$\ and
for all $t$\ in $G$). Thus, for a continuous inverse limit action $%
t\rightarrow \alpha _{t}$ of $G$ on $A$ we can suppose that for each $p\in
S(A)$, there is a continuous action $t\rightarrow \alpha _{t}^{p}$ of $G$ on 
$A_{p}$ such that $\alpha _{t}=\lim\limits_{\underset{p\in S(A)}{\leftarrow }%
}\alpha _{t}^{p}$ for all $t\in G$.

Let $f,h\in C_{c}(G,A)$. The map $(s,t)\rightarrow f(t)\alpha _{t}\left(
h(t^{-1}s)\right) $ from $G\times G$ to $A$ is an element in $C_{c}(G\times
G,A)$ and the relation 
\begin{equation*}
\left( f\times h\right) \left( s\right) =\int\limits_{G}f(t)\alpha
_{t}\left( h(t^{-1}s)\right) dt
\end{equation*}%
defines an element in $C_{c}(G,A)$, called the convolution of $f$ and $h$.

The vector space $C_{c}(G,A)$ becomes a $\ast $-algebra with convolution as
product and involution defined by 
\begin{equation*}
f^{\sharp }(t)=\Delta (t)^{-1}\alpha _{t}\left( f(t^{-1})^{\ast }\right)
\end{equation*}%
where $\Delta $ is the modular function on $G$.

For any $p\in S(A)$, the map $N_{p}:C_{c}(G,A)\rightarrow $ $[0,\infty )$
defined by 
\begin{equation*}
N_{p}(f)=\int\limits_{G}p(f(s))ds
\end{equation*}%
is a submultiplicative $\ast $-seminorm on $C_{c}(G,A)$.

Let $L^{1}(G,A,\alpha )$ be the Hausdorff completion of $C_{c}(G,A)$ with
respect to the topology defined by the family of submultiplicative $\ast $%
-seminorms $\{N_{p}\}_{p\in S(A)}$. Then $L^{1}(G,A,\alpha )$ is a complete
locally $m$-convex $\ast $-algebra.

For each $p\in S(A)$ the map $n_{p}:L^{1}(G,A,\alpha )$ $\rightarrow $ $%
[0,\infty )$ defined by 
\begin{equation*}
n_{p}(f)=\sup \{\left\Vert \varphi \left( f\right) \right\Vert ;\varphi \in 
\mathcal{R}_{p}(L^{1}(G,A,\alpha )\},
\end{equation*}%
where $\mathcal{R}_{p}(L^{1}(G,A,\alpha )$ denotes the set of all
non-degenerate representations $\varphi $ of $L^{1}(G,A,\alpha )$ on Hilbert
spaces which verify the relation 
\begin{equation*}
\left\Vert \varphi \left( f\right) \right\Vert \leq N_{p}(f)
\end{equation*}%
for all $f\in L^{1}(G,A,\alpha )$, is a $C^{\ast }$-seminorm on $%
L^{1}(G,A,\alpha )$. The Hausdorff completion of $L^{1}(G,A,\alpha )$ with
respect to the topology determined by the family of $C^{\ast }$-seminorms $%
\{n_{p}\}_{p\in S(A)}$ is a locally $C^{\ast }$-algebra, denoted by $G\times
_{\alpha }A$, and called the crossed product of $A$ by the action $\alpha $.
Moreover,\smallskip 
\begin{equation*}
A\times _{\alpha }G=\lim\limits_{\overset{\longleftarrow }{p\in S(A)}%
}G\times _{\alpha ^{p}}A_{p}
\end{equation*}%
up to an isomorphism of locally $C^{\ast }$-algebras.

\section{Group actions on Hilbert modules}

Let $A$ and $B$ be two locally $C^{*}$-algebras, let $E$ be a Hilbert $A$%
-module and let $F$ be a Hilbert $B$-module.

\begin{definition}
A map $u:E\rightarrow F$ is called morphism of Hilbert modules if there is a
morphism of locally $C^{\ast }$-algebras $\alpha :A\rightarrow B$ such that 
\begin{equation*}
\left\langle u\left( \xi \right) ,u\left( \eta \right) \right\rangle =\alpha
\left( \left\langle \xi ,\eta \right\rangle \right)
\end{equation*}%
for all $\xi ,\eta \in E$. If we want to specify the morphism $\alpha $ of
locally $C^{\ast }$-algebras, we say that $u$ is an $\alpha $-morphism.
\end{definition}

\begin{remark}
Let $u:E\rightarrow F$ be an $\alpha $-morphism. Then:

\begin{enumerate}
\item $u$ is linear;

\item $u$ is continuous;

\item $u\left( \xi a\right) =u\left( \xi \right) \alpha \left( a\right) $
for all $a\in A$ and for all $\xi \in E$;

\item if $\alpha $ is injective, then $u$ is injective.
\end{enumerate}
\end{remark}

\begin{definition}
An isomorphism of Hilbert modules is a bijective map $u:E\rightarrow F$ such
that $u$ and $u^{-1}$ are morphisms of Hilbert modules.
\end{definition}

\begin{proposition}
Let $E$ be a Hilbert $A$-module, let $F$ be a Hilbert $B$-module and let $%
u:E\rightarrow F$ be an $\alpha $-morphism of Hilbert modules. If $E$ and $F$
are full and $u$ is an isomorphism of Hilbert modules, then $\alpha $ is an
isomorphism of locally $C^{\ast }$-algebras.
\end{proposition}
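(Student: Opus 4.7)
My plan is to extract the morphism $\beta:B\to A$ associated with the inverse map $u^{-1}$ and show that $\beta$ is a two-sided inverse of $\alpha$, where both equalities are verified first on the dense subspace $\langle E,E\rangle$ (resp.\ $\langle F,F\rangle$) and then extended by continuity.

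\textbf{Step 1.} Since $u$ is an isomorphism of Hilbert modules, $u^{-1}:F\to E$ is a morphism of Hilbert modules, so there is a morphism of locally $C^{\ast}$-algebras $\beta:B\to A$ with
\begin{equation*}
\langle u^{-1}(\zeta),u^{-1}(\omega)\rangle=\beta(\langle \zeta,\omega\rangle)
\end{equation*}
for all $\zeta,\omega\in F$.

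\textbf{Step 2.} For any $\xi,\eta\in E$, apply this identity to $\zeta=u(\xi)$, $\omega=u(\eta)$; since $u^{-1}(u(\xi))=\xi$ and $u^{-1}(u(\eta))=\eta$, we obtain
\begin{equation*}
\langle \xi,\eta\rangle=\beta(\langle u(\xi),u(\eta)\rangle)=\beta(\alpha(\langle \xi,\eta\rangle)).
\end{equation*}
Therefore $\beta\circ\alpha$ agrees with $\mathrm{id}_{A}$ on the set $\{\langle \xi,\eta\rangle:\xi,\eta\in E\}$. By linearity, the two maps agree on the linear span $\langle E,E\rangle$; since $E$ is full, $\langle E,E\rangle$ is dense in $A$, and since both $\beta\circ\alpha$ and $\mathrm{id}_{A}$ are continuous, we conclude $\beta\circ\alpha=\mathrm{id}_{A}$.

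\textbf{Step 3.} The symmetric computation, taking $\xi=u^{-1}(\zeta)$, $\eta=u^{-1}(\omega)$ for $\zeta,\omega\in F$, yields
\begin{equation*}
\langle \zeta,\omega\rangle=\alpha(\langle u^{-1}(\zeta),u^{-1}(\omega)\rangle)=\alpha(\beta(\langle \zeta,\omega\rangle)),
\end{equation*}
and the fullness of $F$ together with continuity gives $\alpha\circ\beta=\mathrm{id}_{B}$. Hence $\alpha$ is bijective with inverse $\beta$, and since both $\alpha$ and $\beta$ are morphisms of locally $C^{\ast}$-algebras, $\alpha$ is an isomorphism of locally $C^{\ast}$-algebras.

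There is no serious obstacle here: the argument rests entirely on the defining compatibility of $u$ and $u^{-1}$ with their associated $\ast$-algebra morphisms, together with the density provided by fullness. The only point that requires some attention is that the equalities $\beta\circ\alpha=\mathrm{id}_{A}$ and $\alpha\circ\beta=\mathrm{id}_{B}$ must first be established on the algebraic span of inner products and then extended by continuity using fullness; no use of the $C^{\ast}$-seminorm structure beyond continuity of $\alpha$ and $\beta$ is needed.
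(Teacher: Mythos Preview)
Your proof is correct and matches the paper's argument essentially line for line: both extract the morphism $\beta$ associated with $u^{-1}$, verify $\beta\circ\alpha=\mathrm{id}_A$ and $\alpha\circ\beta=\mathrm{id}_B$ on inner products, and extend by fullness and continuity. There is nothing to add.
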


\begin{proof}
Indeed, since $u^{-1}:F\rightarrow E$ is a morphism of Hilbert modules,
there is a morphism of locally $C^{\ast }$-algebras $\beta :B\rightarrow A$
such that 
\begin{equation*}
\beta \left( \left\langle \eta _{1},\eta _{2}\right\rangle \right)
=\left\langle u^{-1}\left( \eta _{1}\right) ,u^{-1}\left( \eta _{2}\right)
\right\rangle
\end{equation*}%
for all $\eta _{1},\eta _{2}\in F$. Then we have: 
\begin{equation*}
\left( \alpha \circ \beta \right) \left( \left\langle \eta _{1},\eta
_{2}\right\rangle \right) =\alpha \left( \left\langle u^{-1}\left( \eta
_{1}\right) ,u^{-1}\left( \eta _{2}\right) \right\rangle \right)
=\left\langle \eta _{1},\eta _{2}\right\rangle
\end{equation*}%
for all $\eta _{1},\eta _{2}\in F$ and 
\begin{equation*}
\left( \beta \circ \alpha \right) \left( \left\langle \xi _{1},\xi
_{2}\right\rangle \right) =\beta \left( \left\langle u\left( \xi _{1}\right)
,u\left( \xi _{2}\right) \right\rangle \right) =\left\langle \xi _{1},\xi
_{2}\right\rangle
\end{equation*}%
for all $\xi _{1},\xi _{2}\in E$. From these facts and taking into account
that $E$ and $F$ are full and the maps $\alpha $ and $\beta $ are
continuous, we conclude that $\alpha \circ \beta =$id$_{B}$ and $\beta \circ
\alpha =$id$_{A}$. Therefore $\alpha $ is an isomorphism of locally $C^{\ast
}$-algebras.
\end{proof}

For a Hilbert $A$-module $E,$ 
\begin{equation*}
\text{Aut}(E)=\{u:E\rightarrow E;u\text{ is an isomorphism of Hilbert
modules }\}
\end{equation*}
is a group.

\begin{definition}
Let $G$ be a locally compact group. An action of $G$ on $E$ is a morphism of
groups $g\rightarrow u_{g}$ from $G$ to Aut$(E)$.

The action $g\rightarrow u_{g}$ of $G$ on $E$ is continuous if the map $%
g\rightarrow u_{g}\left( \xi \right) $ from $G$ to $E$ is continuous for
each $\xi \in E$.

An action $g\rightarrow u_{g}$ of $G$ on $E$ is an inverse limit action if
we can write $E$ as an inverse limit $\lim\limits_{\underset{\lambda \in
\Lambda }{\leftarrow }}E_{\lambda }$ of Hilbert $C^{\ast }$-modules in such
a way that there are actions $g\rightarrow u_{g}^{\lambda }$ of $G$ on $%
E_{\lambda }$, $\lambda \in \Lambda $ such that $u_{g}=\lim\limits_{\underset%
{\lambda \in \Lambda }{\leftarrow }}u_{g}^{\lambda }$ for each $g\in G$.
\end{definition}

\begin{remark}
Suppose that $g\rightarrow u_{g}$ is an inverse limit action of $G$ on $E$.
Then $E=\lim\limits_{\underset{\lambda \in \Lambda }{\leftarrow }}E_{\lambda
}$ and $u_{g}=\lim\limits_{\underset{\lambda \in \Lambda }{\leftarrow }%
}u_{g}^{\lambda }$ for each $g\in G$, where $g\rightarrow u_{g}^{\lambda }$
is an action of $G$ on $E_{\lambda }$ for each $\lambda \in \Lambda $.

Let $\lambda \in \Lambda $. Since $g\rightarrow u_{g}^{\lambda }$ is an
action of $G$ on $E_{\lambda }$,%
\begin{equation*}
\left\Vert u_{g}^{\lambda }\left( \sigma _{\lambda }(\xi \right) \right\Vert
_{E_{\lambda }}=\left\Vert \sigma _{\lambda }(\xi \right\Vert _{E_{\lambda }}
\end{equation*}%
for each $\xi \in E$, and for all $g\in G$ \cite[pp. 292]{2}. This implies
that 
\begin{equation*}
\overline{p}_{\lambda }(u_{g}(\xi ))=\overline{p}_{\lambda }(\xi )
\end{equation*}%
for all $g\in G$ and for all $\xi \in E$.

From these facts, we conclude that $g\rightarrow u_{g}$ is an inverse limit
action of $G$ on $E$, if $S(G,A)=\{p\in S(A);\overline{p}_{E}\left(
u_{g}\left( \xi \right) \right) =\overline{p}_{E}\left( \xi \right) $ for
all $g\in G$ and for all $\xi \in E\}$ is a cofinal subset of $S(A)$.
Therefore, if $g\rightarrow u_{g}$ is an inverse limit action of $G$ on $E$,
we can suppose that $u_{g}=\lim\limits_{\underset{p\in S(A)}{\leftarrow }%
}u_{g}^{p}$.
\end{remark}

\begin{remark}
Let $g\rightarrow u_{g}$ be an inverse limit action of $G$ on $E$. By Remark
3.6, we can suppose that $u_{g}=\lim\limits_{\underset{p\in S(A)}{\leftarrow 
}}u_{g}^{p}$. If the actions $g\rightarrow u_{g}^{p}$ of $G$ on $E_{p,}$ $%
p\in S(A)$ are all continuous, then, clearly, the action $g\rightarrow u_{g}$
of $G$ on $E$ is continuous.

Conversely, suppose that the action $g\rightarrow u_{g}$\ of $G$\ on $E$\ is
continuous. Let $p\in S(A),$\ $g_{0}\in G,$\ $\xi _{0}\in E$\ and $%
\varepsilon >0$.\ Since the map $g\rightarrow u_{g}\left( \xi _{0}\right) $
from $G$ to $E$\ is continuous, there is a neighborhood $U_{0}$\ of $g_{0}$\
such that\textbf{\ }%
\begin{equation*}
\overline{p}_{E}\left( u_{g}\left( \xi _{0}\right) -u_{g_{0}}\left( \xi
_{0}\right) \right) \leq \varepsilon 
\end{equation*}%
for all $g\in U_{0}$.\ Then%
\begin{eqnarray*}
\left\Vert u_{g}^{p}\left( \sigma _{p}^{E}(\xi _{0})\right)
-u_{g_{0}}^{p}\left( \sigma _{p}^{E}(\xi _{0})\right) \right\Vert _{E_{p}}
&=&\left\Vert \sigma _{p}^{E}\left( u_{g}(\xi _{0})\right) -\sigma
_{p}^{E}\left( u_{g_{0}}(\xi _{0})\right) \right\Vert _{E_{p}} \\
&=&\overline{p}_{E}\left( u_{g}\left( \xi _{0}\right) -u_{g_{0}}\left( \xi
_{0}\right) \right) \leq \varepsilon 
\end{eqnarray*}%
for all $g\in U_{0}$.\ This shows that the action $g\rightarrow u_{g}^{p}$\
of $G$ on $E_{p}$ is continuous. Thus we showed that the inverse limit
action $g\rightarrow u_{g}$\ of $G$\ on $E$\ is continuous if and only if
the actions $g\rightarrow u_{g}^{p}$\ of $G$\ on $E_{p,}$\ $p\in S(A)$\ are
all continuous.
\end{remark}

\begin{proposition}
Let $G$ be a locally compact group and let $E$ be a full Hilbert $A$-module.
Any action $g\rightarrow u_{g}$ of $G$ on $E$ induces an action $%
g\rightarrow \alpha _{g}^{u}$ of $G$ on $A$ such that 
\begin{equation*}
\alpha _{g}^{u}\left( \left\langle \xi ,\eta \right\rangle \right)
=\left\langle u_{g}\left( \xi \right) ,u_{g}\left( \eta \right) \right\rangle
\end{equation*}%
for all $g\in G$ and for all $\xi ,\eta \in E$ and an action $g\rightarrow
\beta _{g}^{u}$ of $G$ on $K(E)$ such that 
\begin{equation*}
\beta _{g}^{u}\left( \theta _{\xi ,\eta }\right) =\theta _{u_{g}\left( \xi
\right) ,u_{g}\left( \eta \right) }
\end{equation*}%
for all $g\in G$ and for all $\xi ,\eta \in E$. Moreover, if $g\rightarrow
u_{g}$ is a continuous inverse limit action of $G$ on $E$, then the actions
of $G$ on $A$ respectively $K(E)$ induced by $u$ are continuous inverse
limit actions.
\end{proposition}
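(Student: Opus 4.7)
The plan is to construct the two induced actions directly from the definitions and then reduce the inverse-limit/continuous claim to the known Hilbert $C^\ast$-module case. For each $g\in G$, the isomorphism $u_g$ of Hilbert modules supplies, via Definition 3.1, a morphism $\alpha_g^u:A\to A$ of locally $C^\ast$-algebras with $\alpha_g^u(\langle\xi,\eta\rangle)=\langle u_g(\xi),u_g(\eta)\rangle$, and Proposition 3.5 together with the fullness of $E$ upgrades $\alpha_g^u$ to an isomorphism. The defining identity determines $\alpha_g^u$ uniquely on the dense subspace $\langle E,E\rangle$, so checking
\begin{equation*}
\alpha_{gh}^u(\langle\xi,\eta\rangle)=\langle u_g(u_h(\xi)),u_g(u_h(\eta))\rangle=\alpha_g^u\bigl(\alpha_h^u(\langle\xi,\eta\rangle)\bigr)
\end{equation*}
on inner products and extending by density gives $\alpha_{gh}^u=\alpha_g^u\circ\alpha_h^u$ and $\alpha_e^u=\mathrm{id}_A$; thus $g\mapsto\alpha_g^u$ is a group morphism into $\mathrm{Aut}(A)$.

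For the action on $K(E)$ I would set $\beta_g^u(T):=u_g\circ T\circ u_g^{-1}$ for $T\in L(E)$. The proof of Proposition 3.5 forces $u_g^{-1}$ to be an $(\alpha_g^u)^{-1}$-morphism, so
\begin{equation*}
\beta_g^u(T)(\xi a)=u_g\bigl(T(u_g^{-1}(\xi)(\alpha_g^u)^{-1}(a))\bigr)=u_g(T(u_g^{-1}(\xi)))\,a
\end{equation*}
shows $A$-linearity, and a parallel manipulation of $\langle\cdot,\cdot\rangle$ gives $\beta_g^u(T)^{\ast}=\beta_g^u(T^{\ast})$. A direct computation yields $\beta_g^u(\theta_{\xi,\eta})=\theta_{u_g(\xi),u_g(\eta)}$, so $\beta_g^u$ preserves $\Theta(E)$ and, by continuity, its closure $K(E)$. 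Continuity of $\beta_g^u$ with respect to each $\widetilde{p}_{L(E)}$ follows from the identity $\overline{p}_E(u_g\xi)^2=(p\circ\alpha_g^u)(\langle\xi,\xi\rangle)$ and its $u_g^{-1}$ analogue, which together yield $\widetilde{p}_{L(E)}(\beta_g^u(T))\le\widetilde{q}_{L(E)}(T)$ with $q:=p\circ\alpha_g^u\in S(A)$, and multiplicativity $\beta_{gh}^u=\beta_g^u\circ\beta_h^u$ is immediate.

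For the inverse limit and continuity statements, Remark 3.6 lets me assume $S(G,A)$ is cofinal and $u_g=\lim\limits_{\underset{p}{\leftarrow}}u_g^p$. Each $u_g^p\in\mathrm{Aut}(E_p)$ then induces, by the classical Hilbert $C^\ast$-module result \cite{2}, an action $\alpha_g^{u,p}\in\mathrm{Aut}(A_p)$ satisfying $\alpha_g^{u,p}(\langle\sigma_p^E\xi,\sigma_p^E\eta\rangle)=\langle u_g^p\sigma_p^E\xi,u_g^p\sigma_p^E\eta\rangle$, and the relation $u_g^p\circ\sigma_p^E=\sigma_p^E\circ u_g$ translates directly into $\pi_p^A\circ\alpha_g^u=\alpha_g^{u,p}\circ\pi_p^A$ on $\langle E,E\rangle$, hence on $A$ by density. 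This identifies $\alpha^u$ with $\lim\limits_{\underset{p}{\leftarrow}}\alpha^{u,p}$; an entirely analogous argument using $K(E)=\lim\limits_{\underset{p}{\leftarrow}}K(E_p)$ identifies $\beta^u$ with $\lim\limits_{\underset{p}{\leftarrow}}\beta^{u,p}$. Continuity of $g\mapsto u_g^p$ (Remark 3.7) then propagates to continuity of $g\mapsto\alpha_g^{u,p}$ and $g\mapsto\beta_g^{u,p}$ by standard $C^\ast$-algebra estimates, and a second application of Remark 3.7 delivers the continuity of $\alpha^u$ and $\beta^u$.

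The main obstacle I anticipate is the seminorm bookkeeping on $L(E)$ and $K(E)$: confirming that $\beta_g^u$ is continuous in the general (non-inverse-limit) setting, and verifying that $G$-invariance of $\overline{p}_E$ on $E$ genuinely transfers to $G$-invariance of $p$ on $A$ and of $\widetilde{p}_{L(E)}$ on $K(E)$, so that the inverse systems $\{A_p\}$ and $\{K(E_p)\}$ are preserved setwise by the induced actions. Once this is in place, everything else reduces to the density of $\langle E,E\rangle$ in $A$ combined with the Hilbert $C^\ast$-module version of the result.
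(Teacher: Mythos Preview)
Your proposal is correct and follows essentially the same route as the paper: obtain $\alpha_g^u$ from Proposition~3.4 and fullness, verify the group law on inner products, establish the seminorm identity $\widetilde{p}_{L(E)}(\beta_g^u(\,\cdot\,))=\widetilde{r}_{L(E)}(\,\cdot\,)$ with $r=p\circ\alpha_g^u$, and then reduce the inverse-limit/continuity claim to the $C^\ast$-case of \cite{2} via Remarks~3.6--3.7. The only cosmetic difference is that the paper defines $\beta_g^u$ on $\Theta(E)$ by the rank-one formula and extends by continuity, whereas you define it on $L(E)$ by conjugation and then restrict; both lead to the same seminorm estimate and the same conclusion.
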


\begin{proof}
Let $g\in G$. Since $E$ is full and $u_{g}\in $Aut$(E)$, there is an
isomorphism of locally $C^{\ast }$-algebras $\alpha _{g}^{u}:A\rightarrow A$
such that 
\begin{equation*}
\alpha _{g}^{u}\left( \left\langle \xi ,\eta \right\rangle \right)
=\left\langle u_{g}\left( \xi \right) ,u_{g}\left( \eta \right) \right\rangle
\end{equation*}%
for all $\xi ,\eta \in E$. It is not difficult to check that the map $%
g\rightarrow \alpha _{g}^{u}$ from $G$ to Aut$\left( A\right) $ is a
morphism of groups. Therefore $g\rightarrow \alpha _{g}^{u}$ is an action of 
$G$ on $A.$

Let $g\in G$. Consider the linear map $\beta _{g}^{u}:\Theta \left( E\right)
\rightarrow \Theta (E)$ defined by 
\begin{equation*}
\beta _{g}^{u}\left( \theta _{\xi ,\eta }\right) =\theta _{u_{g}\left( \xi
\right) ,u_{g}\left( \eta \right) }.
\end{equation*}%
It is not difficult to check that $\beta _{g}^{u}$ is a $\ast $-morphism.

Let $p\in S(A)$. Then 
\begin{eqnarray*}
\widetilde{p}_{L(E)}\left( \beta _{g}^{u}\left( \theta _{\xi ,\eta }\right)
\right) &=&\sup \{\overline{p}_{E}\left( u_{g}\left( \xi \right)
\left\langle u_{g}\left( \eta \right) ,u_{g}\left( \zeta \right)
\right\rangle \right) ;\overline{p}_{E}\left( u_{g}\left( \zeta \right)
\right) \leq 1\} \\
&=&\sup \{\overline{p}_{E}\left( u_{g}\left( \xi \left\langle \eta ,\zeta
\right\rangle \right) \right) ;\overline{p}_{E}\left( u_{g}\left( \zeta
\right) \right) \leq 1\} \\
&=&\sup \{\left( p\circ \alpha _{g}^{u}\right) \left( \left\langle \zeta
,\eta \right\rangle \left\langle \xi ,\xi \right\rangle \left\langle \eta
,\zeta \right\rangle \right) ^{1/2};\left( p\circ \alpha _{g}^{u}\right)
\left( \zeta \right) \leq 1\} \\
&=&\sup \{r\left( \left\langle \zeta ,\eta \right\rangle \left\langle \xi
,\xi \right\rangle \left\langle \eta ,\zeta \right\rangle \right)
^{1/2};r\left( \zeta \right) \leq 1\} \\
&=&\widetilde{r}_{L(E)}\left( \theta _{\xi ,\eta }\right)
\end{eqnarray*}%
where $r=p\circ \alpha _{g}^{u}\in S(A)$, for all $\xi ,\eta \in E$. From
this fact, we conclude that $\beta _{g}^{u}$ extends to a morphism from $%
K(E) $ to $K(E)$, denoted also by $\beta _{g}^{u}$. Moreover, since 
\begin{equation*}
\left( \beta _{g}^{u}\circ \beta _{g^{-1}}^{u}\right) \left( \theta _{\xi
,\eta }\right) =\theta _{\xi ,\eta }=\left( \beta _{g^{-1}}^{u}\circ \beta
_{g}^{u}\right) \left( \theta _{\xi ,\eta }\right)
\end{equation*}%
for all $\xi ,\eta \in E$, $\beta _{g}^{u}$ is invertible and $\left( \beta
_{g}^{u}\right) ^{-1}=$ $\beta _{g^{-1}}^{u}$. Therefore $\beta _{g}^{u}\in $%
Aut$\left( K(E)\right) $. It is not difficult to check that the map $%
g\rightarrow $ $\beta _{g}^{u}$ is a morphism of groups and so it defines an
action of $G$ on $K(E).$

Now suppose that $g\rightarrow u_{g}$ is a continuous inverse limit action
of $G$ on $E$. Let $p\in S(A)$. Then the map $g\rightarrow u_{g}^{p}$ is a
continuous action of $G$ on $E_{p}$ and so it induces a continuous action $%
g\rightarrow \alpha _{g}^{u^{p}}$ of $G$ on $A_{p}$ such that 
\begin{equation*}
\alpha _{g}^{u^{p}}\left( \left\langle \sigma _{p}^{E}(\xi ),\sigma
_{p}^{E}(\eta )\right\rangle \right) =\left\langle u_{g}^{p}\left( \sigma
_{p}^{E}(\xi )\right) ,u_{g}^{p}\left( \sigma _{p}^{E}(\eta )\right)
\right\rangle
\end{equation*}%
for all $\xi ,\eta \in E$ and for all $g\in G$ and a continuous action $%
g\rightarrow \beta _{g}^{u^{p}}$ of $G$ on $K(E_{p})$ such that 
\begin{equation*}
\beta _{g}^{u^{p}}\left( \theta _{\sigma _{p}^{E}(\xi ),\sigma _{p}^{E}(\eta
)}\right) =\theta _{u_{g}^{p}\left( \sigma _{p}^{E}(\xi )\right)
,u_{g}^{p}\left( \sigma _{p}^{E}(\eta )\right) }
\end{equation*}%
for all $g\in G$ and for all $\xi ,\eta \in E\ $( see, for example, \cite{2}%
).

Let $p,q\in S(A)$ with $p\geq q$ and $g\in G$. Then:%
\begin{eqnarray*}
\left( \pi _{pq}^{A}\circ \alpha _{g}^{u^{p}}\right) \left( \left\langle
\sigma _{p}^{E}(\xi ),\sigma _{p}^{E}(\eta )\right\rangle \right) &=&\pi
_{pq}^{A}\left( \left\langle u_{g}^{p}\left( \sigma _{p}^{E}(\xi )\right)
,u_{g}^{p}\left( \sigma _{p}^{E}(\eta )\right) \right\rangle \right) \\
&=&\left\langle \left( \sigma _{pq}^{E}\circ u_{g}^{p}\right) \left( \sigma
_{p}^{E}(\xi )\right) ,\left( \sigma _{pq}^{E}\circ u_{g}^{p}\right) \left(
\sigma _{p}^{E}(\eta )\right) \right\rangle \\
&=&\left\langle u_{g}^{q}\left( \sigma _{q}^{E}(\xi )\right)
,u_{g}^{q}\left( \sigma _{q}^{E}(\eta )\right) \right\rangle \\
&=&\alpha _{g}^{u^{q}}\left( \left\langle \sigma _{q}^{E}(\xi ),\sigma
_{q}^{E}(\eta )\right\rangle \right) \\
&=&\left( \alpha _{g}^{u^{q}}\circ \pi _{pq}^{A}\right) \left( \left\langle
\sigma _{p}^{E}(\xi ),\sigma _{p}^{E}(\eta )\right\rangle \right)
\end{eqnarray*}%
and 
\begin{eqnarray*}
\left( \left( \pi _{pq}^{A}\right) _{\ast }\circ \beta _{g}^{u^{p}}\right)
\left( \theta _{\sigma _{p}^{E}(\xi ),\sigma _{p}^{E}(\eta )}\right)
&=&\left( \pi _{pq}^{A}\right) _{\ast }\left( \theta _{u_{g}^{p}\left(
\sigma _{p}^{E}(\xi )\right) ,u_{g}^{p}\left( \sigma _{p}^{E}(\eta )\right)
}\right) \\
&=&\theta _{\left( \sigma _{pq}^{E}\circ u_{g}^{p}\right) \left( \sigma
_{p}^{E}(\xi )\right) ,\left( \sigma _{pq}^{E}\circ u_{g}^{p}\right) \left(
\sigma _{p}^{E}(\eta )\right) } \\
&=&\theta _{u_{g}^{q}\left( \sigma _{q}^{E}(\xi )\right) ,u_{g}^{q}\left(
\sigma _{q}^{E}(\eta )\right) } \\
&=&\beta _{g}^{u^{q}}\left( \theta _{\sigma _{q}^{E}(\xi ),\sigma
_{q}^{E}(\eta )}\right) \\
&=&\left( \beta _{g}^{u^{q}}\circ \left( \pi _{pq}^{A}\right) _{\ast
}\right) \left( \theta _{\sigma _{p}^{E}(\xi ),\sigma _{p}^{E}(\eta )}\right)
\end{eqnarray*}%
for all $\xi ,\eta \in E$. From these relations, we conclude that $(\alpha
_{g}^{u^{p}})_{p}$ and $\left( \beta _{g}^{u^{p}}\right) _{p}$ are inverse
systems of $C^{\ast }$-isomorphisms, and then $\lim\limits_{\underset{p\in
S(A)}{\leftarrow }}\alpha _{g}^{u^{p}}$ and $\lim\limits_{\underset{p\in S(A)%
}{\leftarrow }}\beta _{g}^{u^{p}}$ are isomorphisms of locally $C^{\ast }$%
-algebras. It is not difficult to check that $g\rightarrow \lim\limits_{%
\underset{p\in S(A)}{\leftarrow }}\alpha _{g}^{u^{p}}$ and $g\rightarrow
\lim\limits_{\underset{p\in S(A)}{\leftarrow }}\beta _{g}^{u^{p}}$ are
continuous inverse limit actions of $G$ on $A$ respectively $K(E)$.
Moreover, $\alpha _{g}^{u}=\lim\limits_{\underset{p\in S(A)}{\leftarrow }%
}\alpha _{g}^{u^{p}}$ of $G$ and $\beta _{g}^{u}=\lim\limits_{\underset{p\in
S(A)}{\leftarrow }}\beta _{g}^{u^{p}}$ for all $g\in G$.
\end{proof}

\section{Morita equivalence of group actions on locally $C^{*}$-algebras}

In this section we introduce the notion of (strong) Morita equivalence of
group actions on locally $C^{*}$-algebras and show that this is an
equivalence relation.

\begin{definition}
Let $G$ be a locally compact group, let $A$ and $B$ be two locally $C^{\ast
} $-algebras. Two actions $g\rightarrow \alpha _{g}$ and $g\rightarrow \beta
_{g}$ of $G$ on $A$ respectively $B$ are conjugate if there is an
isomorphism of locally $C^{\ast }$-algebras $\varphi :A\rightarrow B$ such
that $\alpha _{g}=\varphi ^{-1}\circ \beta _{g}\circ \varphi $ for each $%
g\in G.$
\end{definition}

\begin{remark}
Conjugacy of group actions on locally $C^{\ast }$-algebras is an equivalence
relation.
\end{remark}

\begin{lemma}
Let $g\rightarrow \alpha _{g}$ and $g\rightarrow \beta _{g}$ be two
conjugate actions of $G$ on the locally $C^{\ast }$-algebras $A$
respectively $B$. If $g\rightarrow \alpha _{g}$ is a continuous inverse
limit action of $G$ on $A$, then $g\rightarrow \beta _{g}$ is a continuous
inverse limit action of $G$ on $B.$
\end{lemma}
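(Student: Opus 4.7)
The plan is to transport the defining data of $\alpha$ across the intertwining isomorphism $\varphi$. By the definition given in Section 2, to show that $\beta$ is a continuous inverse limit action it suffices to exhibit a cofinal subset of $G$-invariant continuous $C^{\ast}$-seminorms on $B$; I will also verify continuity of the map $g\mapsto\beta_{g}(b)$ for each $b\in B$, which is what the word ``continuous'' in the conclusion refers to.

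Continuity of $\beta$ is immediate from the hypothesis. Indeed, $\beta_{g}=\varphi\circ\alpha_{g}\circ\varphi^{-1}$, and since $\varphi$ and $\varphi^{-1}$ are continuous morphisms of locally $C^{\ast}$-algebras, the map $g\mapsto\beta_{g}(b)=\varphi\bigl(\alpha_{g}(\varphi^{-1}(b))\bigr)$ is continuous as a composition with the continuous map $g\mapsto\alpha_{g}(\varphi^{-1}(b))$.

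For the cofinal family, the key observation is that $\varphi$ induces a bijection $S(A)\to S(B)$ via $p\mapsto p\circ\varphi^{-1}$, with inverse $q\mapsto q\circ\varphi$; this is because both $\varphi$ and $\varphi^{-1}$ are morphisms of locally $C^{\ast}$-algebras. For each $\alpha$-invariant $p\in S(A)$, set $q_{p}=p\circ\varphi^{-1}$. A direct computation using the identity $\alpha_{g}=\varphi^{-1}\circ\beta_{g}\circ\varphi$ shows that $q_{p}(\beta_{g}(b))=p(\varphi^{-1}(\beta_{g}(b)))=p(\alpha_{g}(\varphi^{-1}(b)))=p(\varphi^{-1}(b))=q_{p}(b)$, so $q_{p}$ is $\beta$-invariant. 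Cofinality then follows by transport: given any $r\in S(B)$, the hypothesis furnishes an $\alpha$-invariant $p\in S(A)$ with $p\geq r\circ\varphi$, and then $q_{p}\geq r$ on $B$. Hence the $\beta$-invariant seminorms are cofinal in $S(B)$.

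I do not anticipate any real obstacle; the whole proof is a routine transport of structure through the isomorphism $\varphi$. The only conceptual point worth flagging is that the definition of continuous inverse limit action in terms of cofinal $G$-invariant seminorms is being applied in both directions: to extract such a family on $A$ from the hypothesis, and then to recognize the transferred family on $B$ as witnessing the conclusion.
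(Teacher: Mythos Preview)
Your proof is correct and follows essentially the same strategy as the paper: both transport the $\alpha$-invariant seminorms across $\varphi$ by setting $q_{p}=p\circ\varphi^{-1}$ and verify that the resulting family is cofinal in $S(B)$. The only difference is cosmetic---the paper goes on to explicitly build the quotient isomorphisms $\varphi_{p}:A_{p}\to B_{p\circ\varphi^{-1}}$ and the conjugate actions $\beta_{g}^{p}=\varphi_{p}\circ\alpha_{g}^{p}\circ\varphi_{p}^{-1}$ to exhibit $\beta$ as an inverse limit, whereas you stop once the cofinal family of $\beta$-invariant seminorms is in hand and invoke the characterization from Section~2 directly.
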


\begin{proof}
Suppose that $\alpha _{g}=\lim\limits_{\underset{p\in S(A)}{\leftarrow }%
}\alpha _{g}^{p}$ for each $g\in G$. Let $\varphi :A\rightarrow B$ be an
isomorphism of locally $C^{\ast }$-algebras such that $\alpha _{g}=\varphi
^{-1}\circ \beta _{g}\circ \varphi $ for each $g\in G$.

Let $p\in S(A)$. Then $p\circ \varphi ^{-1}$ is a continuous $C^{\ast }$%
-seminorm on $B.$ We show that $\{p\circ \varphi ^{-1}\}_{p\in S(A)}$ is a
cofinal subset of $S(B)$. For this, let $q\in S(B)$. Since $\varphi $ is an
isomorphism of locally $C^{\ast }$-algebras, there is $p\in S(A)$ such that 
\begin{equation*}
q\left( \varphi \left( a\right) \right) \leq p(a)
\end{equation*}%
for all $a\in A$. Then 
\begin{equation*}
q\left( b\right) =q\left( \varphi \left( \varphi ^{-1}(b)\right) \right)
\leq p\left( \varphi ^{-1}(b)\right) =\left( p\circ \varphi ^{-1}\right)
\left( b\right) 
\end{equation*}%
for all $b\in B$. Therefore $\{p\circ \varphi ^{-1}\}_{p\in S(A)}$ is a
cofinal subset of $S(B)$ and thus we can identify $B$ with $\lim\limits_{%
\underset{p\in S(A)}{\leftarrow }}B_{p\circ \varphi ^{-1}}$.

For each $p\in S(A)$, there is an isomorphism of $C^{\ast }$-algebras $%
\varphi _{p}:A_{p}\rightarrow B_{p\circ \varphi ^{-1}}$ such that 
\begin{equation*}
\varphi _{p}\circ \pi _{p}^{A}=\pi _{p\circ \varphi ^{-1}}^{B}\circ \varphi 
\end{equation*}%
since $\ker \pi _{p}^{A}=\ker \left( \pi _{p\circ \varphi ^{-1}}^{B}\circ
\varphi \right) $. The map $g\rightarrow \beta _{g}^{p}$, where $\beta
_{g}^{p}=\varphi _{p}\circ \alpha _{g}^{p}\circ \varphi _{p}^{-1}$, is a
continuous action of $G$ on $B_{p\circ \varphi ^{-1}}$ which is conjugate
with $\alpha ^{p}$. It is not difficult to check that $(\beta _{g}^{p})_{p}$
is an inverse system of $C^{\ast }$-isomorphisms and $\beta
_{g}=\lim\limits_{\underset{p\in S(A)}{\leftarrow }}\beta _{g}^{p}$ for each 
$g\in G$. Therefore $g\rightarrow \beta _{g}$ is a continuous inverse limit
action of $G$ on $B$.
\end{proof}

\begin{proposition}
Let $g\rightarrow \alpha _{g}$ and $g\rightarrow \beta _{g}$ be two
continuous inverse limit actions of $G$ on the locally $C^{\ast }$-algebras $%
A$ and $B$. If the actions $\alpha $ and $\beta $ are conjugate, then the
crossed products $G\times _{\alpha }A$ and $G\times _{\beta }B$ are
isomorphic.
\end{proposition}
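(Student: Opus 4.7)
The plan is to construct the isomorphism $\Phi:G\times_{\alpha}A\to G\times_{\beta}B$ in two stages: first at the algebraic level on the $\ast$-algebras $C_{c}(G,A)$ and $C_{c}(G,B)$, and then to extend continuously to the crossed products using the inverse limit decompositions guaranteed by Lemma 4.3 together with the classical result of Combes/Curto--Muhly--Williams for $C^{\ast}$-algebras.

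First I would define $\Phi_{0}:C_{c}(G,A)\to C_{c}(G,B)$ by $\Phi_{0}(f)(s)=\varphi(f(s))$. A short computation, using the conjugacy identity $\beta_{g}\circ\varphi=\varphi\circ\alpha_{g}$ and part (3) of Lemma 2.2 (which says morphisms of locally $C^{\ast}$-algebras commute with the $G$-integral), shows that $\Phi_{0}$ is a $\ast$-algebra morphism: it preserves the convolution (because we can pull $\varphi$ inside the integral and exchange $\varphi\circ\alpha_{t}$ for $\beta_{t}\circ\varphi$) and the involution (because $\varphi$ is a $\ast$-map and commutes with $\alpha_{t}$ through $\beta_{t}$). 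Since $\varphi^{-1}$ satisfies the analogous conjugacy relation $\alpha_{g}\circ\varphi^{-1}=\varphi^{-1}\circ\beta_{g}$, the inverse construction gives a two-sided inverse $\ast$-algebra morphism, so $\Phi_{0}$ is a $\ast$-isomorphism of $\ast$-algebras.

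To pass to the crossed products I would invoke Lemma 4.3 and its proof: using the cofinal family $\{p\circ\varphi^{-1}\}_{p\in S(A)}\subseteq S(B)$ and the $C^{\ast}$-isomorphisms $\varphi_{p}:A_{p}\to B_{p\circ\varphi^{-1}}$ satisfying $\varphi_{p}\circ\pi_{p}^{A}=\pi_{p\circ\varphi^{-1}}^{B}\circ\varphi$, we obtain conjugate continuous actions $\alpha^{p}$ on $A_{p}$ and $\beta^{p}=\varphi_{p}\circ\alpha^{p}\circ\varphi_{p}^{-1}$ on $B_{p\circ\varphi^{-1}}$. The classical $C^{\ast}$-algebra result then yields $C^{\ast}$-algebra isomorphisms
\begin{equation*}
\Phi_{p}:G\times_{\alpha^{p}}A_{p}\longrightarrow G\times_{\beta^{p}}B_{p\circ\varphi^{-1}},\qquad \Phi_{p}(f)(s)=\varphi_{p}(f(s))
\end{equation*}
for each $p\in S(A)$. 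I would then check that these $\Phi_{p}$ form a morphism of inverse systems, i.e.\ that they intertwine the canonical connecting maps coming from $\pi_{pq}^{A}$ and $\pi_{p\circ\varphi^{-1},q\circ\varphi^{-1}}^{B}$; this is the routine observation that both sides act by $\varphi_{q}$ on a function $s\mapsto\pi_{pq}^{A}(f(s))$ and the diagram of the $\varphi_{p}$'s commutes.

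Finally, using the identifications $A\times_{\alpha}G\cong\varprojlim_{p}G\times_{\alpha^{p}}A_{p}$ and $B\times_{\beta}G\cong\varprojlim_{p}G\times_{\beta^{p}}B_{p\circ\varphi^{-1}}$ provided at the end of Section~2, I would take $\Phi=\varprojlim_{p}\Phi_{p}$; this is an isomorphism of locally $C^{\ast}$-algebras whose restriction to $C_{c}(G,A)$ agrees with $\Phi_{0}$. The main point requiring care, and the place where the Hausdorff-completion definition of the crossed product could be awkward if one tried a purely direct approach, is verifying that $\Phi_{0}$ is continuous for the $C^{\ast}$-seminorms $n_{p}$ defining $G\times_{\alpha}A$; the inverse limit route sidesteps this by reading off the continuity from the norm-decreasing property of each $\Phi_{p}$ at the $C^{\ast}$-algebra level. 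The inverse of $\Phi$ is constructed symmetrically from $\varphi^{-1}$.
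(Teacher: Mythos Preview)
Your proposal is correct and follows essentially the same route as the paper's proof: invoke Lemma~4.3 to reduce to the cofinal family $\{p\circ\varphi^{-1}\}_{p\in S(A)}$, use the classical $C^{\ast}$-algebra result (the paper cites \cite[p.~66]{17}) to obtain level-wise isomorphisms $\Phi_{p}:G\times_{\alpha^{p}}A_{p}\to G\times_{\beta^{p}}B_{p\circ\varphi^{-1}}$ given by post-composition with $\varphi_{p}$, verify that $(\Phi_{p})_{p}$ is an inverse system of $C^{\ast}$-isomorphisms, and take the inverse limit. Your preliminary construction of $\Phi_{0}$ on $C_{c}(G,A)$ is not needed for the argument (the paper omits it entirely and works purely at the inverse-limit level), but it is harmless and correct.
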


\begin{proof}
By Lemma 4.3, we can suppose that $S(B)=\{p\circ \varphi ^{-1}\}_{p\in S(A)}$%
, where $\varphi $ is an isomorphism of locally $C^{\ast }$-algebras from $A$
to $B$ such that $\beta _{g}=\varphi \circ \alpha _{g}\circ \varphi ^{-1}.$
Then $B=\lim\limits_{\underset{p\in S(A)}{\leftarrow }}B_{p\circ \varphi
^{-1}},$ $\alpha _{g}=\lim\limits_{\underset{p\in S(A)}{\leftarrow }}\alpha
_{g}^{p}$ for each $g\in G$ and $\beta _{g}=\lim\limits_{\underset{p\in S(A)}%
{\leftarrow }}\beta _{g}^{p}$ for each $g\in G$. Moreover, for each $p\in
S(A)$, the actions $g\rightarrow \alpha _{g}^{p}$ of $G$ on $A_{p}$ and $%
g\rightarrow \beta _{g}^{p}$ of $G$ on $B_{p\circ \varphi ^{-1}}$ are
conjugate.

Let $p\in S(A),$ and let $\varphi _{p}$ be the isomorphism of $C^{\ast }$%
-algebras from $A_{p}$ to $B_{p\circ \varphi ^{-1}}$ such that $\beta
_{g}^{p}=\varphi _{p}\circ \alpha _{g}^{p}\circ \varphi _{p}^{-1}$. Then the
linear map $\phi _{p}:C_{c}(G,A_{p})\rightarrow C_{c}(G,B_{p\circ \varphi
^{-1}})$ defined by 
\begin{equation*}
\phi _{p}\left( f_{p}\right) =\varphi _{p}\circ f_{p}
\end{equation*}%
extends to an isomorphism of $C^{\ast }$-algebras from $G\times _{\alpha
^{p}}A_{p}$ to $G\times _{\beta ^{p}}B_{p\circ \varphi ^{-1}}$ \cite[pp. 66]%
{17}. Since 
\begin{eqnarray*}
\left( \pi _{pq}^{G\times _{\beta }B}\circ \phi _{p}\right) \left(
f_{p}\right)  &=&\pi _{pq}^{B}\circ \phi _{p}\left( f_{p}\right) =\pi
_{pq}^{B}\circ \varphi _{p}\circ f_{p} \\
&=&\left( \varphi _{q}\circ \pi _{pq}^{A}\right) \circ f_{p}=\varphi
_{q}\circ \left( \pi _{pq}^{A}\circ f_{p}\right)  \\
&=&\phi _{q}\left( \pi _{pq}^{G\times _{\alpha }A}\left( f_{p}\right)
\right) =\left( \phi _{q}\circ \pi _{pq}^{G\times _{\alpha }A}\right) \left(
f_{p}\right) 
\end{eqnarray*}%
for all $f_{p}\in $ $C_{c}(G,A_{p})$ and for all $p,q\in S(A)$ with $p\geq q,
$ $\left( \phi _{p}\right) _{p}$ is an inverse system of isomorphisms of $%
C^{\ast }$-algebras and then $\phi =$ $\lim\limits_{\underset{p\in S(A)}{%
\leftarrow }}\phi _{p}$ is an isomorphism of locally $C^{\ast }$-algebras
from $\lim\limits_{\underset{p\in S(A)}{\leftarrow }}G\times _{\alpha
^{p}}A_{p}$ to $\lim\limits_{\underset{p\in S(A)}{\leftarrow }}G\times
_{\beta ^{p}}B_{p}$. Therefore the locally $C^{\ast }$-algebras $G\times
_{\alpha }A$ and $G\times _{\beta }B$ are isomorphic.
\end{proof}

\begin{definition}
Let $G$ be a locally compact group, let $A$ and $B$ be two locally $C^{\ast
} $-algebras, and let $g\rightarrow \alpha _{g}$ and $g\rightarrow \beta
_{g} $ be two actions of $G$ on $A$ and $B$. We say that $\alpha $ and $%
\beta $ are ( strongly ) Morita equivalent, if there exist a full Hilbert $A$%
-module $E$, and a ( continuous ) action $g\rightarrow u_{g}$ of $G$ on $E$
such that the actions of $G$ on $A$ and $K(E)$ induced by $u$ are conjugate
with the actions $\alpha $ respectively $\beta $. We write $\alpha \thicksim
_{E,u}\beta $ if the actions $\alpha $ and $\beta $ are Morita equivalent
and $\alpha \thicksim _{E,u}^{s}\beta $ if the actions $\alpha $ and $\beta $
are strongly Morita equivalent.
\end{definition}

\begin{remark}
Two conjugate (continuous inverse limit ) actions of $G$ on the locally $%
C^{\ast }$-algebras $A$ and $B$ are (strongly) Morita equivalent.
\end{remark}

\begin{remark}
If $\alpha \thicksim _{E,u}^{s}\beta $ and $\alpha $ is a continuous inverse
limit action, then $u$ and $\beta $ are continuous inverse limit actions.
Indeed, for any $p\in S(A)$ we have%
\begin{equation*}
\overline{p}_{E}(u_{g}\left( \xi \right) )^{2}=p\left( \left\langle
u_{g}\left( \xi \right) ,u_{g}\left( \xi \right) \right\rangle \right)
=p\left( \alpha _{g}\left( \left\langle \xi ,\xi \right\rangle \right)
\right) =\overline{p}_{E}\left( \xi \right) ^{2}
\end{equation*}%
for all $g\in G$ and for all $\xi \in E$, and then, by Remark 3.6, $%
g\rightarrow u_{g}$ is a continuous inverse limit action of $G$ on $E$.
Moreover, by Proposition 3.8 and Lemma 4.3, $g\rightarrow \beta _{g}$ is a
continuous inverse limit action.
\end{remark}

Let $E$ be a full Hilbert $A$-module and let $\widetilde{E}=K(E,A)$. Then $%
\widetilde{E}$ is a Hilbert $K(E)$-module in a natural way \cite[pp. 805-806]%
{6}. Moreover, $\widetilde{E}$ is full and the locally $C^{\ast }$-algebras $%
K(\widetilde{E})$ and $A$ are isomorphic \cite[pp. 805-806]{6}.

\begin{proposition}
Let $G$ be a locally compact group and let $g\rightarrow u_{g}$ be an action
of $G$ on a full Hilbert $A$-module $E$. Then $u$ induces an action $%
g\rightarrow \widetilde{u}_{g}$ of $G$ on the Hilbert $K(E)$-module $%
\widetilde{E}$ such that $\widetilde{u}_{g}\left( \theta _{a,\xi }\right)
=\theta _{\alpha _{g}^{u}\left( a\right) ,u_{g}\left( \xi \right) }$ for all 
$a\in A$ and for all $\xi \in E$. Moreover, if $u$ is a continuous inverse
limit action, then $\widetilde{u}$ is a continuous inverse limit action.
\end{proposition}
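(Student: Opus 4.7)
The plan is to define $\widetilde{u}_g : \widetilde{E} \to \widetilde{E}$ by the natural formula
\[
\widetilde{u}_g(T) = \alpha_g^u \circ T \circ u_{g^{-1}}, \qquad T \in K(E, A),
\]
where $\alpha_g^u$ is the action of $G$ on $A$ induced by $u$ via Proposition 3.8. A short computation using $u_g(\xi a) = u_g(\xi)\alpha_g^u(a)$ and $\alpha_g^u(\langle \xi, \eta \rangle) = \langle u_g(\xi), u_g(\eta)\rangle$ shows that on the rank-one generator $\theta_{a, \xi}$ this formula yields precisely $\theta_{\alpha_g^u(a), u_g(\xi)}$, which still lies in $\Theta(E, A)$; so $\widetilde{u}_g$ preserves $\Theta(E, A)$.

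First I would show that $\widetilde{u}_g$ extends to an automorphism of the Hilbert $K(E)$-module $\widetilde{E}$. Mimicking the seminorm calculation in the proof of Proposition 3.8 for $\beta^u$, I would establish an identity of the form $\widetilde{p}_{L(E, A)}(\widetilde{u}_g(T)) = \widetilde{r}_{L(E, A)}(T)$ for $T \in \Theta(E, A)$, with $r = p \circ \alpha_g^u \in S(A)$, thereby making $\widetilde{u}_g$ continuous on the dense subspace $\Theta(E, A)$ and extending it to a continuous map $K(E, A) \to K(E, A)$. Next I would check on rank-ones (and then extend by continuity) that
\[
\langle \widetilde{u}_g(T_1), \widetilde{u}_g(T_2)\rangle = \beta_g^u(\langle T_1, T_2\rangle),
\]
so that $\widetilde{u}_g$ is a $\beta_g^u$-morphism of Hilbert $K(E)$-modules; bijectivity follows from $\widetilde{u}_{g^{-1}}$ being a two-sided inverse, and the group law $\widetilde{u}_{gh} = \widetilde{u}_g \widetilde{u}_h$ is immediate from the definition combined with the group laws for $u$ and $\alpha^u$.

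For the inverse-limit statement, assume $u_g = \lim_p u_g^p$. By Proposition 3.8, $\alpha_g^u = \lim_p \alpha_g^{u^p}$ and $\beta_g^u = \lim_p \beta_g^{u^p}$ are continuous inverse limit actions. At each level $p \in S(A)$ the classical $C^*$-module construction recalled from \cite{6} produces a continuous action $g \mapsto \widetilde{u_g^p}$ of $G$ on $\widetilde{E_p} = K(E_p, A_p)$. Under the identification $\widetilde{E} = K(E, A) \cong \lim_p K(E_p, A_p) = \lim_p \widetilde{E_p}$, I would verify directly on rank-one generators that $(\widetilde{u_g^p})_p$ is a compatible family with $\widetilde{u}_g = \lim_p \widetilde{u_g^p}$; continuity of $g \mapsto \widetilde{u}_g$ on $\widetilde{E}$ then follows from the continuity of each $\widetilde{u^p}$ via Remark 3.7 applied to the dual module.

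The main technical obstacle will be the seminorm estimate that underlies the extension of $\widetilde{u}_g$ from $\Theta(E, A)$ to $K(E, A)$: since here the operators take values in $A$ rather than in $E$, the computation of $\widetilde{p}_{L(E, A)}(\widetilde{u}_g(\theta_{a, \xi}))$ must carefully exploit $\alpha_g^u$ on the $A$-valued side and the $A$-module structure of $E$, rather than appeal to $\beta_g^u$ on $K(E)$ as in Proposition 3.8. Once this estimate is in place, the remaining verifications (bijectivity, group law, compatibility with the inverse system) are routine.
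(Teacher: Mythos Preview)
Your proposal is correct and follows essentially the same route as the paper: define $\widetilde{u}_g$ on rank-one operators, establish the seminorm identity $\widetilde{p}_{L(E,A)}(\widetilde{u}_g(\theta_{a,\xi}))=\widetilde{r}_{L(E,A)}(\theta_{a,\xi})$ with $r=p\circ\alpha_g^u$, extend by density to $K(E,A)$, verify $\langle\widetilde{u}_g(\cdot),\widetilde{u}_g(\cdot)\rangle=\beta_g^u(\langle\cdot,\cdot\rangle)$ on rank-ones, and handle the inverse-limit statement levelwise via $\widetilde{u^p}$ on $\widetilde{E_p}$. Your global formula $\widetilde{u}_g(T)=\alpha_g^u\circ T\circ u_{g^{-1}}$ is a convenient conceptual repackaging (and indeed reduces to $\theta_{\alpha_g^u(a),u_g(\xi)}$ on $\theta_{a,\xi}$), but the actual verifications you outline match the paper's step for step.
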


\begin{proof}
Let $g\in G$. Consider the linear map $\widetilde{u}_{g}:\Theta \left(
E,A\right) \rightarrow \Theta \left( E,A\right) $ defined by 
\begin{equation*}
\widetilde{u}_{g}(\theta _{a,\xi })=\theta _{\alpha _{g}^{u}\left( a\right)
,u_{g}\left( \xi \right) }.
\end{equation*}%
Since 
\begin{eqnarray*}
\overline{p}_{\widetilde{E}}\left( \widetilde{u}_{g}(\theta _{a,\xi
})\right)  &=&\widetilde{p}_{L(E,A)}\left( \widetilde{u}_{g}(\theta _{a,\xi
})\right) =\widetilde{p}_{L(E,A)}\left( \theta _{\alpha _{g}^{u}\left(
a\right) ,u_{g}\left( \xi \right) }\right)  \\
&=&\sup \{p\left( \alpha _{g}^{u}\left( a\right) \left\langle u_{g}\left(
\xi \right) ,u_{g}\left( \zeta \right) \right\rangle \right) ;\overline{p}%
_{E}(u_{g}\left( \zeta \right) )\leq 1\} \\
&=&\widetilde{r}_{L(E,A)}\left( \theta _{a,\xi }\right) =\overline{r}_{%
\widetilde{E}}\left( \theta _{a,\xi }\right) 
\end{eqnarray*}%
where $r=p\circ \alpha _{g}^{u}\in S(A)$, for all $a\in A$ and for all $\xi
\in E$, $\widetilde{u}_{g}$ extends to a continuous linear map, denoted also
by $\widetilde{u}_{g}$, from $K(E,A)$ to $K(E,A)$. From 
\begin{eqnarray*}
\left\langle \widetilde{u}_{g}(\theta _{a,\xi }),\widetilde{u}_{g}(\theta
_{b,\eta })\right\rangle  &=&\left\langle \theta _{\alpha _{g}^{u}\left(
a\right) ,u_{g}\left( \xi \right) },\theta _{\alpha _{g}^{u}\left( b\right)
,u_{g}\left( \eta \right) }\right\rangle  \\
&=&\theta _{u_{g}\left( \xi \right) ,\alpha _{g}^{u}\left( a\right) }\circ
\theta _{\alpha _{g}^{u}\left( b\right) ,u_{g}\left( \eta \right) } \\
&=&\theta _{u_{g}\left( \xi a^{\ast }\right) ,u_{g}\left( \eta b^{\ast
}\right) }=\beta _{g}^{u}\left( \theta _{\xi a^{\ast }\eta b^{\ast }}\right) 
\\
&=&\beta _{g}^{u}\left( \left\langle \theta _{a,\xi },\theta _{b,\eta
}\right\rangle \right) 
\end{eqnarray*}%
for all $\xi ,\eta \in E$ and for all $a,b\in A$, and taking into account
that $\widetilde{u}_{g}$ is continuous and $\Theta \left( E,A\right) $ is
dense in $K(E,A)$, we conclude that $\widetilde{u}_{g}$ is a morphism of
Hilbert modules. Moreover, since $\widetilde{u}_{g}$ is invertible and $%
\left( \widetilde{u}_{g}\right) ^{-1}=\widetilde{u}_{g^{-1}},$ $\widetilde{u}%
_{g}$ is an isomorphism of Hilbert modules. A simple calculation shows that
the map $g\rightarrow \widetilde{u}_{g}$ is an action of $G$ on $\widetilde{E%
}$.

Now, we suppose that $g\rightarrow u_{g}$ is a continuous inverse limit
action. Then $u_{g}=\lim\limits_{\underset{p\in S(A)}{\leftarrow }}u_{g}^{p}$
for each $g\in G$ and $g\rightarrow u_{g}^{p}$ is a continuous action of $G$
on $E_{p}$ for each $p\in S(A)$. For each $p\in S(A),$ the continuous action 
$u^{p}$ of $G$ on $E_{p}$ induces a continuous action $\widetilde{u^{p}}$ of 
$G$ on $\widetilde{E_{p}}$. It is not difficult to check that for each $g\in
G,$ $\left( \widetilde{u}_{g}^{p}\right) _{p}$ is an inverse system of
isomorphisms of Hilbert modules and then $g\rightarrow \lim\limits_{\underset%
{p\in S(A)}{\leftarrow }}\widetilde{u}_{g}^{p}$ is a continuous inverse
limit action of $G$ on $\lim\limits_{\underset{p\in S(A)}{\leftarrow }}$ $%
\widetilde{E_{p}}$. Since the Hilbert $A$-modules $\widetilde{E}$ and $%
\lim\limits_{\underset{p\in S(A)}{\leftarrow }}$ $\widetilde{E_{p}}$ are
unitarily equivalent \cite[pp. 805-806]{6}, the action $g\rightarrow
\lim\limits_{\underset{p\in S(A)}{\leftarrow }}\widetilde{u}_{g}^{p}$ of $G$
on $\lim\limits_{\underset{p\in S(A)}{\leftarrow }}$ $\widetilde{E_{p}}$ can
be identified with an action of $G$ on $\widetilde{E}$. Moreover,%
\begin{equation*}
\left( \lim\limits_{\underset{p\in S(A)}{\leftarrow }}\widetilde{u}%
_{g}^{p}\right) \left( \theta _{a,\xi })\right) =\theta _{\alpha
_{g}^{u}\left( a\right) ,u_{g}\left( \xi \right) }
\end{equation*}
for all $a\in A$ and for all $\xi \in E$.
\end{proof}

\begin{remark}
Let $G$ be a locally compact group and let $g\rightarrow u_{g}$ be an action
of $G$ on a full Hilbert $A$-module $E$.

\begin{enumerate}
\item The action of $G$ on $K(E)$ induced by $\widetilde{u}$ coincides with
the action of $G$ on $K(E)$ induced by $u$.

\item Since $\varphi :K(\widetilde{E})\rightarrow A$ defined by 
\begin{equation*}
\varphi \left( \theta _{\theta _{a,\xi },\theta _{b,\eta }}\right)
=\left\langle \xi a^{\ast },\eta b^{\ast }\right\rangle
\end{equation*}%
is an isomorphism of locally $C^{\ast }$-algebras, and since 
\begin{eqnarray*}
\left( \alpha _{g}^{u}\circ \varphi \right) \left( \theta _{\theta _{a,\xi
},\theta _{b,\eta }}\right) &=&\alpha _{g}^{u}\left( \left\langle \xi
a^{\ast },\eta b^{\ast }\right\rangle \right) \\
&=&\left\langle u_{g}\left( \xi \right) \alpha _{g}^{u}\left( a\right)
^{\ast },u_{g}\left( \eta \right) \alpha _{g}^{u}\left( b\right) ^{\ast
}\right\rangle \\
&=&\varphi \left( \theta _{\theta \alpha _{g}^{u}\left( a\right)
,u_{g}\left( \xi \right) ,\theta \alpha _{g}^{u}\left( b\right) ,u_{g}\left(
\eta \right) }\right) \\
&=&\varphi \left( \theta _{\widetilde{u}_{g}\left( \theta _{a,\xi }\right) ,%
\widetilde{u}_{g}\left( \theta _{b,\xi }\right) }\right) \\
&=&\left( \varphi \circ \beta _{g}^{\widetilde{u}}\right) \left( \theta
_{\theta _{a,\xi },\theta _{b,\eta }}\right)
\end{eqnarray*}%
for all $a,b\in A,$ for all $\xi ,\eta \in E$ and for all $g\in G,$ the
action of $G$ on $A$ induced by $u$ is conjugate with the action of $G$ on $%
K(\widetilde{E})$ induced by $\widetilde{u}.$
\end{enumerate}
\end{remark}

Let $E$ be a Hilbert $A$-module, let $F$ be a Hilbert $B$-module and let $%
\Phi :A\rightarrow L(F)$ be a non-degenerate representation of $A$ on $F$.
The tensor product $E\otimes _{A}F$ of $E$ and $F$ over $A$ becomes a
pre-Hilbert $B$-module with the action of $B$ on $E\otimes _{A}F$ defined by 
\begin{equation*}
\left( \xi \otimes _{A}\eta \right) b=\xi \otimes _{A}\eta b
\end{equation*}%
and the inner product defined by 
\begin{equation*}
\left\langle \xi _{1}\otimes _{A}\eta _{1},\xi _{2}\otimes _{A}\eta
_{2}\right\rangle _{\Phi }=\left\langle \eta _{1},\Phi \left( \left\langle
\xi _{1},\xi _{2}\right\rangle \right) \eta _{2}\right\rangle .
\end{equation*}%
The completion of $E\otimes _{A}F$ with respect to the topology determined
by the inner product is called the interior tensor product of the Hilbert
modules $E$ and $F$ using $\Phi $ and it is denoted by $E\otimes _{\Phi }F$ 
\cite[Proposition 4.1]{11}. An element in $E\otimes _{\Phi }F$ is denoted by 
$\xi \otimes _{\Phi }\eta .$

\begin{definition}
Let $g\rightarrow \alpha _{g}$ be an action of $G$ on $A$ and let $\Phi
:A\rightarrow L(F)$ be a non-degenerate representation of $A$ on a Hilbert $%
B $-module $F$. An action $g\rightarrow v_{g}$ of $G$ on $F$ is $\Phi $
covariant relative to $\alpha $ if 
\begin{equation*}
v_{g}\Phi \left( a\right) v_{g^{-1}}=\Phi \left( \alpha _{g}(a)\right)
\end{equation*}%
for all $a\in A$ and for all $g\in G$.
\end{definition}

\begin{proposition}
Let $G$ be a locally compact group, let $E$ be a full Hilbert $A$-module,
let $F$ be a full Hilbert $B$-module and let $\Phi :A\rightarrow L(F)$ be a
non-degenerate representation of $A$ on $F$. If $g\rightarrow u_{g}$ is an
action of $G$ on $E$ and $g\rightarrow v_{g}$ is an action of $G$ on $F$
which is $\Phi $ covariant relative to $\alpha ^{u}$ ( the action of $G$ on $%
A$ induced by $u$ ), then there is a unique action $g\rightarrow w_{g}^{u,v} 
$ of $G$ on $E\otimes _{\Phi }F$ such that 
\begin{equation*}
w_{g}^{u,v}\left( \xi \otimes _{\Phi }\eta \right) =u_{g}\left( \xi \right)
\otimes _{\Phi }v_{g}\left( \eta \right)
\end{equation*}%
for all $\xi \in E$, for all $\eta \in F$ and for all $g\in G$. Moreover, if 
$g$ $\rightarrow u_{g}$ is a continuous action of $G$ on $E$ and $%
g\rightarrow v_{g}$ is a continuous inverse limit action of $G$ on $F$, then 
$g\rightarrow w_{g}^{u,v}$ is a continuous inverse limit action of $G$ on $%
E\otimes _{\Phi }F$.
\end{proposition}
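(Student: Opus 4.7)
The plan is to first define $w_g^{u,v}$ on the algebraic balanced tensor product $E\otimes_A F$ by the prescribed formula, extend by continuity to the completion, verify the group-law, and handle the continuity and inverse-limit claims using the seminorm framework of Remark~3.6. Set $w_g^{u,v}(\xi\otimes_A\eta) = u_g(\xi)\otimes_A v_g(\eta)$ and extend linearly; well-definedness on $E\otimes_A F$ amounts to respecting the balancing relation $\xi a\otimes_A\eta = \xi\otimes_A\Phi(a)\eta$. Since $u_g$ is an $\alpha_g^u$-morphism we have $u_g(\xi a) = u_g(\xi)\alpha_g^u(a)$, and the $\Phi$-covariance of $v$ relative to $\alpha^u$ gives
\[
\Phi(\alpha_g^u(a))\,v_g(\eta) = v_g\,\Phi(a)\,v_{g^{-1}}v_g(\eta) = v_g\bigl(\Phi(a)\eta\bigr),
\]
so both sides of the balancing relation have the same image under $w_g^{u,v}$ and the map descends to $E\otimes_A F$.

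Writing $\alpha^v$ for the action on $B$ induced by $v$ (Proposition~3.8; note $F$ is full), $v_g$ is an $\alpha_g^v$-morphism, so a direct computation using the covariance identity yields
\[
\langle w_g^{u,v}(x), w_g^{u,v}(y)\rangle_\Phi = \alpha_g^v(\langle x, y\rangle_\Phi)
\]
on the algebraic tensor product. Hence $\overline{q}_{E\otimes_\Phi F}(w_g^{u,v}(x))^2 = (q\circ\alpha_g^v)(\langle x,x\rangle_\Phi)$ for every $q\in S(B)$, which shows $w_g^{u,v}$ is continuous and extends to an $\alpha_g^v$-morphism of the completed Hilbert $B$-module $E\otimes_\Phi F$; performing the same construction with $g^{-1}$ provides an inverse, so the extension is an isomorphism of Hilbert modules. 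The identities $w_{gh}^{u,v} = w_g^{u,v}\circ w_h^{u,v}$ and $w_e^{u,v}=\mathrm{id}$ are checked on elementary tensors and then propagate to the completion, so $g\mapsto w_g^{u,v}$ is a group action, and uniqueness is immediate from density of $E\otimes_A F$ in $E\otimes_\Phi F$.

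For the moreover assertion, Proposition~3.8 turns the hypothesis that $v$ is a continuous inverse-limit action into the fact that $\alpha^v$ is a continuous inverse-limit action on $B$, so the set of $\alpha^v$-invariant $C^{\ast}$-seminorms is cofinal in $S(B)$. For such invariant $q$ the inner-product identity above collapses to $\overline{q}_{E\otimes_\Phi F}(w_g^{u,v}(x)) = \overline{q}_{E\otimes_\Phi F}(x)$, and Remark~3.6 then shows that $w^{u,v}$ is an inverse-limit action of $G$ on $E\otimes_\Phi F$. Continuity reduces, via density and the uniform isometry estimate on invariant seminorms, to checking that $g\mapsto w_g^{u,v}(\xi\otimes_\Phi\eta)$ is continuous at a fixed $g_0$ for each elementary tensor; splitting
\[
w_g^{u,v}(\xi\otimes_\Phi\eta) - w_{g_0}^{u,v}(\xi\otimes_\Phi\eta) = \bigl(u_g(\xi)-u_{g_0}(\xi)\bigr)\otimes_\Phi v_g(\eta) + u_{g_0}(\xi)\otimes_\Phi\bigl(v_g(\eta)-v_{g_0}(\eta)\bigr)
\]
and combining the continuity of $u$ and $v$ with the standard $\overline{q}$-estimate on $\xi'\otimes_\Phi\eta'$ coming from the continuity of $\Phi$ finishes the argument.

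The main obstacle is the very first step: the well-definedness of $w_g^{u,v}$ on the balanced tensor product, since it is precisely there that the $\Phi$-covariance hypothesis is indispensable. Once this and the inner-product identity are in hand, every remaining step is either a formal manipulation or an application of the inverse-limit bookkeeping already developed earlier in the paper.
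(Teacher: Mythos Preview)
Your argument is correct. The construction of $w_g^{u,v}$, the verification of the balancing relation via covariance, the inner-product identity $\langle w_g^{u,v}(x),w_g^{u,v}(y)\rangle_\Phi=\alpha_g^v(\langle x,y\rangle_\Phi)$, and the passage to the completion all match the paper's proof essentially line for line.

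Where you diverge is in the ``moreover'' clause. The paper proceeds in two stages: first it treats the case where $B$ is a $C^{\ast}$-algebra by an explicit $\varepsilon$-argument, and then it invokes the identification $E\otimes_\Phi F\cong\lim\limits_{\leftarrow q}E\otimes_{\Phi_q}F_q$ (from \cite{11}) together with the fact that each $v^q$ is $\Phi_q$-covariant, so that the inverse-limit structure is built explicitly from the component actions $w^{u,v^q}$. Your route is more intrinsic: you observe that the $\alpha^v$-invariant seminorms on $B$ are cofinal (because $\alpha^v$ is a continuous inverse-limit action by Proposition~3.8), and for such $q$ the inner-product identity immediately gives $\overline{q}_{E\otimes_\Phi F}\circ w_g^{u,v}=\overline{q}_{E\otimes_\Phi F}$, so Remark~3.6 applies directly. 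This avoids the external citation and the reduction to the $C^{\ast}$-case; the trade-off is that the paper's version makes the inverse system of $w^{u,v}$ concretely visible as $(w^{u,v^q})_q$, which is convenient if one later needs to work levelwise. One small point worth making explicit in your write-up: the density argument reduces continuity not to a single elementary tensor but to a finite sum of them, and your splitting estimate together with the bound $\overline{q}_{E\otimes_\Phi F}(\xi'\otimes_\Phi\eta')\le\overline{p}_E(\xi')\,\overline{q}_F(\eta')$ (for $p$ chosen via the continuity of $\Phi$ relative to $q$) handles each summand.
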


\begin{proof}
Let $g\in G$. Since 
\begin{equation*}
u_{g}\left( \xi a\right) \otimes _{\Phi }v_{g}\left( \eta \right)
=u_{g}\left( \xi \right) \otimes _{\Phi }v_{g}\Phi \left( a\right) \left(
\eta \right)
\end{equation*}%
for all $\xi \in E$, for all $\eta \in F$ and for all $a\in A$, and since 
\begin{eqnarray*}
\overline{q}_{E\otimes _{\Phi }F}\left( u_{g}\left( \xi \right) \otimes
_{\Phi }v_{g}\left( \eta \right) \right) ^{2} &=&q\left( \left\langle
v_{g}\left( \eta \right) ,\Phi \left( \alpha _{g}^{u}\left( \left\langle \xi
,\xi \right\rangle \right) \right) v_{g}\left( \eta \right) \right\rangle
\right) \\
&=&q\left( \left\langle v_{g}\left( \eta \right) ,v_{g}\left( \Phi \left(
\left\langle \xi ,\xi \right\rangle \right) \left( \eta \right) \right)
\right\rangle \right) \\
&=&q\left( \alpha _{g}^{v}\left( \left\langle \eta ,\Phi \left( \left\langle
\xi ,\xi \right\rangle \right) \left( \eta \right) \right\rangle \right)
\right) \\
&=&\left( q\circ \alpha _{g}^{v}\right) \left( \left\langle \xi \otimes
_{\Phi }\eta ,\xi \otimes _{\Phi }\eta \right\rangle _{\Phi }\right) \\
&=&\overline{r}_{E\otimes _{\Phi }F}\left( \xi \otimes _{\Phi }\eta \right)
^{2}
\end{eqnarray*}%
where $r=q\circ \alpha _{g}^{v}\in S(B)$, for all $\xi \in E$ and for all $%
\eta \in F$, there is a continuous linear map $w_{g}^{u,v}:E\otimes _{\Phi
}F\rightarrow E\otimes _{\Phi }F$ such that 
\begin{equation*}
w_{g}^{u,v}\left( \xi \otimes _{\Phi }\eta \right) =u_{g}\left( \xi \right)
\otimes _{\Phi }v_{g}\left( \eta \right) .
\end{equation*}%
From 
\begin{equation*}
\left\langle w_{g}^{u,v}\left( \xi _{1}\otimes _{\Phi }\eta _{1}\right)
,w_{g}^{u,v}\left( \xi _{2}\otimes _{\Phi }\eta _{2}\right) \right\rangle
_{\Phi }=\alpha _{g}^{v}\left( \left\langle \xi _{1}\otimes _{\Phi }\eta
_{1},\xi _{2}\otimes _{\Phi }\eta _{2}\right\rangle _{\Phi }\right)
\end{equation*}%
for all $\xi _{1},\xi _{2}\in E$ and for all $\eta _{1},\eta _{2}\in F$ and
taking into account that $w_{g}^{u,v}\circ
w_{g^{-1}}^{u,v}=w_{g-1}^{u,v}\circ w_{g}^{u,v}=$id $_{E\otimes _{\Phi }F},$
we conclude that $w_{g}^{u,v}\in $Aut$\left( E\otimes _{\Phi }F\right) $.
Since $\{\xi \otimes _{\Phi }\eta ;\xi \in E,\eta \in F\}$ generates $%
E\otimes _{\Phi }F$, $w_{g}^{u,v}$ is the unique linear map from $E\otimes
_{\Phi }F$ to $E\otimes _{\Phi }F$ such that 
\begin{equation*}
w_{g}^{u,v}\left( \xi \otimes _{\Phi }\eta \right) =u_{g}\left( \xi \right)
\otimes _{\Phi }v_{g}\left( \eta \right)
\end{equation*}%
for all $\xi \in E$, for all $\eta \in F$ and for all $g\in G$. It is not
difficult to check that $g\rightarrow w_{g}^{u,v}$ is an action of $G$ on $%
E\otimes _{\Phi }F$.

To show that the action $w^{u,v}$ of $G$ on $E\otimes _{\Phi }F$ is a
continuous inverse limit action if $u$ is continuous and $v$ is a continuous
inverse limit action, we partition the proof into two steps.

\textit{Step 1}. We suppose that $B$ is a $C^{\ast }$-algebra.

Let $x_{0}\in E\otimes _{\Phi }F$ and $\varepsilon >0.$ Then there is $\xi
_{0}\in E$ and $\eta _{0}\in F$ such that%
\begin{equation*}
\left\Vert x_{0}-\xi _{0}\otimes _{\Phi }\eta _{0}\right\Vert \leq
\varepsilon /6.
\end{equation*}%
Since $\Phi $ is a representation of $A$ on $F$ there is $p\in S(A)$ such
that 
\begin{equation*}
\left\Vert \Phi \left( a\right) \right\Vert \leq p(a)
\end{equation*}%
for all $a\in A$, and since $g\rightarrow u_{g}$ and $g\rightarrow v_{g}$
are continuous, there is a neighborhood $U_{0}$ of $g_{0}$ such that 
\begin{equation*}
\overline{p}_{E}\left( u_{g}\left( \xi _{0}\right) -\xi _{0}\right) \leq
\min \{\sqrt{\varepsilon /6},\varepsilon /\left( 6\left\Vert \eta
_{0}\right\Vert \right) \}
\end{equation*}%
and 
\begin{equation*}
\left\Vert v_{g}\left( \eta _{0}\right) -\eta _{0}\right\Vert \leq \min \{%
\sqrt{\varepsilon /6},\varepsilon /\left( 6\overline{p}_{E}(\xi _{0})\right)
\}
\end{equation*}%
for all $g\in U_{0}$. Then, since 
\begin{eqnarray*}
\left\Vert \xi \otimes _{\Phi }\eta \right\Vert ^{2} &=&\left\Vert
\left\langle \xi \otimes _{\Phi }\eta ,\xi \otimes _{\Phi }\eta
\right\rangle _{\Phi }\right\Vert =\left\Vert \left\langle \eta ,\Phi \left(
\left\langle \xi ,\xi \right\rangle \right) \eta \right\rangle \right\Vert \\
&\leq &\left\Vert \eta \right\Vert \left\Vert \Phi \left( \left\langle \xi
,\xi \right\rangle \right) \eta \right\Vert \leq \left\Vert \eta \right\Vert
^{2}\left\Vert \Phi \left( \left\langle \xi ,\xi \right\rangle \right)
\right\Vert \\
&\leq &\left\Vert \eta \right\Vert ^{2}p\left( \left\langle \xi ,\xi
\right\rangle \right) =\left\Vert \eta \right\Vert ^{2}\overline{p}%
_{E}\left( \xi \right) ^{2}
\end{eqnarray*}%
for all $\xi \in E$ and for all $\eta \in F$, we have 
\begin{eqnarray*}
\left\Vert w_{g}^{u,v}\left( x_{0}\right) -x_{0}\right\Vert &\leq
&\left\Vert w_{g}^{u,v}\left( x_{0}-\xi _{0}\otimes _{\Phi }\eta _{0}\right)
\right\Vert +\left\Vert w_{g}^{u,v}\left( \xi _{0}\otimes _{\Phi }\eta
_{0}\right) -\xi _{0}\otimes _{\Phi }\eta _{0}\right\Vert \\
&&+\left\Vert x_{0}-\xi _{0}\otimes _{\Phi }\eta _{0}\right\Vert \\
&\leq &2\left\Vert x_{0}-\xi _{0}\otimes _{\Phi }\eta _{0}\right\Vert
+\left\Vert u_{g}\left( \xi _{0}\right) \otimes _{\Phi }v_{g}\left( \eta
_{0}\right) -\xi _{0}\otimes _{\Phi }\eta _{0}\right\Vert \\
&\leq &\varepsilon /3+\left\Vert \left( u_{g}\left( \xi _{0}\right) -\xi
_{0}\right) \otimes _{\Phi }\left( v_{g}\left( \eta _{0}\right) -\eta
_{0}\right) \right\Vert \\
&&+\left\Vert \xi _{0}\otimes _{\Phi }\left( v_{g}\left( \eta _{0}\right)
-\eta _{0}\right) \right\Vert +\left\Vert \left( u_{g}\left( \xi _{0}\right)
-\xi _{0}\right) \otimes _{\Phi }\eta _{0}\right\Vert \\
&\leq &\varepsilon /3+\left\Vert v_{g}\left( \eta _{0}\right) -\eta
_{0}\right\Vert \overline{p}_{E}\left( u_{g}\left( \xi _{0}\right) -\xi
_{0}\right) \\
&&+\left\Vert v_{g}\left( \eta _{0}\right) -\eta _{0}\right\Vert \overline{p}%
_{E}\left( \xi _{0}\right) +\left\Vert \eta _{0}\right\Vert \overline{p}%
_{E}\left( u_{g}\left( \xi _{0}\right) -\xi _{0}\right) \\
&\leq &\varepsilon
\end{eqnarray*}%
for all $g\in U_{0}$. Therefore the action $w^{u,v}$ of $G$ on $E\otimes
_{\Phi }F$ is continuous.

\textit{Step 2.} The general case when $B$ is a locally $C^{\ast }$-algebra.
By \cite[Proposition 4.2]{11}, $E\otimes _{\Phi }F$ can be identified with $%
\lim\limits_{\underset{q\in S(B)}{\leftarrow }}E\otimes _{\Phi _{q}}F_{q}$,
where $\Phi _{q}=\left( \pi _{q}^{B}\right) _{\ast }\circ \Phi $ for each $%
q\in S(B)$.

Let $q\in S(B)$. From 
\begin{eqnarray*}
v_{g}^{q}\Phi _{q}\left( a\right) v_{g^{-1}}^{q}\left( \sigma _{q}^{F}\left(
\eta \right) \right) &=&\left( v_{g}^{q}\Phi _{q}\left( a\right) \right)
\left( \sigma _{q}^{F}\left( v_{g^{-1}}\left( \eta \right) \right) \right) \\
&=&v_{g}^{q}\left( \sigma _{q}^{F}\left( \Phi \left( a\right)
v_{g^{-1}}\left( \eta \right) \right) \right) \\
&=&\sigma _{q}^{F}\left( v_{g}^{q}\Phi \left( a\right) v_{g^{-1}}\left( \eta
\right) \right) =\sigma _{q}^{F}\left( \Phi \left( \alpha _{g}^{u}\left(
a\right) \right) \left( \eta \right) \right) \\
&=&\Phi _{q}\left( \alpha _{g}^{u}\left( a\right) \right) \left( \sigma
_{q}^{F}\left( \eta \right) \right)
\end{eqnarray*}%
for all $\eta \in F$, for all $a\in A$ and for all $g\in G$, we conclude
that the action $g\rightarrow v_{g}^{q}$ of $G$ on $F_{q}$ is $\Phi _{q}$
covariant relative to $\alpha ^{u}$. Then, by the first step of this prof, $%
g\rightarrow w_{g}^{u,v^{q}}$ is a continuous action of $G$ on $E\otimes
_{\Phi _{q}}F_{q}$. It is not difficult to check that for each $g\in G$, $%
\left( w_{g}^{u,v^{q}}\right) _{q}$ is an inverse system of isomorphisms of
Hilbert $C^{\ast }$-modules, and the map $g\rightarrow \lim\limits_{\underset%
{q\in S(B)}{\leftarrow }}w_{g}^{u,v^{q}}$ is a continuous inverse limit
action of $G$ on $\lim\limits_{\underset{q\in S(B)}{\leftarrow }}E\otimes
_{\Phi _{q}}F_{q}$. Moreover, this action can be identified with the action $%
g\rightarrow w_{g}^{u,v}$ of $G$ on $E\otimes _{\Phi }F.$
\end{proof}

\begin{remark}
\begin{enumerate}
\item The action of $G$ on $B$ induced by the action $w^{u,v}$ of $G$ on $%
E\otimes _{\Phi }F$ coincides with the action of $G$ on $B$ induced by $v$.

\item Suppose that $\Phi :A\rightarrow K(E)$ is an isomorphism of locally $%
C^{\ast }$-algebras. Then $\Phi _{\ast }:K(E)\rightarrow K(E\otimes _{\Phi
}F)$ defined by $\Phi _{\ast }\left( T\right) \left( \xi \otimes _{\Phi
}\eta \right) =T(\xi )\otimes _{\Phi }\eta $ is an isomorphism of locally $%
C^{\ast }$-algebras \cite[Proposition 4.4 and Corollary 4.6]{11}. In this
case, the action of $G$ on $K(E\otimes _{\Phi }F)$ induced by $w^{u,v}$ is
conjugate with the action of $G$ on $K(E)$ induced by $u$. Indeed, we have 
\begin{eqnarray*}
\left( \beta _{g}^{w^{u,v}}\circ \Phi _{\ast }\right) \left( T\right) \left(
\xi \otimes _{\Phi }\eta \right) &=&\left( w_{g}^{u,w}\Phi _{\ast }\left(
T\right) w_{g^{-1}}^{u,w}\right) \left( \xi \otimes _{\Phi }\eta \right) \\
&=&\left( w_{g}^{u,w}\Phi _{\ast }\left( T\right) \right) \left(
u_{g^{-1}}\left( \xi \right) \otimes _{\Phi }v_{g^{-1}}\left( \eta \right)
\right) \\
&=&w_{g}^{u,w}\left( T\left( u_{g^{-1}}\left( \xi \right) \right) \otimes
_{\Phi }v_{g^{-1}}\left( \eta \right) \right) \\
&=&u_{g}Tu_{g^{-1}}\left( \xi \right) \otimes _{\Phi }\eta \\
&=&\beta _{g}^{u}\left( T\right) \left( \xi \right) \otimes _{\Phi }\eta \\
&=&\Phi _{\ast }\left( \beta _{g}^{u}\left( T\right) \right) \left( \xi
\otimes _{\Phi }\eta \right) \\
&=&\left( \Phi _{\ast }\circ \beta _{g}^{u}\right) \left( T\right) \left(
\xi \otimes _{\Phi }\eta \right)
\end{eqnarray*}%
for all $\xi \in E$ and for all $\eta \in F$.
\end{enumerate}
\end{remark}

\begin{proposition}
Morita equivalence of group actions on locally $C^{\ast }$-algebras is an
equivalence relation.
\end{proposition}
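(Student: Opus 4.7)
The plan is to verify reflexivity, symmetry and transitivity, each time exhibiting an appropriate full Hilbert module and group action.

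\textbf{Reflexivity.} For an action $\alpha$ of $G$ on $A$, take $E = A$ as the canonical full Hilbert $A$-module over itself and set $u_g = \alpha_g$. A direct computation shows that the action of $G$ on $A$ induced by $u$ is $\alpha$ itself, and under the canonical isomorphism $K(A) \cong A$ the induced action on $K(E)$ is also conjugate to $\alpha$. Hence $\alpha \sim_{E,u} \alpha$.

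\textbf{Symmetry.} Assume $\alpha \sim_{E,u} \beta$. Proposition 4.8 provides the dual Hilbert $K(E)$-module $\widetilde{E} = K(E,A)$ together with an induced action $\widetilde{u}$ of $G$. By Remark 4.9(1), the action on $K(E)$ induced by $\widetilde{u}$ coincides with the one induced by $u$ and is therefore conjugate to $\beta$. By Remark 4.9(2), together with the isomorphism $K(\widetilde{E}) \cong A$, the action on $K(\widetilde{E})$ induced by $\widetilde{u}$ is conjugate to the action on $A$ induced by $u$, hence to $\alpha$. This yields $\beta \sim_{\widetilde{E},\widetilde{u}} \alpha$.

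\textbf{Transitivity.} Suppose $\alpha \sim_{E,u} \beta$ with $E$ a full Hilbert $A$-module, and $\beta \sim_{F,v} \gamma$ with $F$ a full Hilbert $B$-module. Composing the two conjugacy isomorphisms to $\beta$ yields an isomorphism of locally $C^*$-algebras $\Phi: B \to K(E)$ intertwining the action on $B$ induced by $v$ with the action on $K(E)$ induced by $u$. Composing with the inclusion $K(E) \hookrightarrow L(E)$, I view $\Phi$ as a non-degenerate representation $B \to L(E)$; the intertwining property is exactly the $\Phi$-covariance of $u$ (Definition 4.10) relative to the action on $B$ induced by $v$. Apply Proposition 4.11 with the roles of the first and second Hilbert modules played by $F$ and $E$ respectively to obtain a Hilbert $A$-module $H = F \otimes_\Phi E$ carrying an action $w = w^{v,u}$ of $G$. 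Remark 4.12(1) says the action on $A$ induced by $w$ equals the action on $A$ induced by $u$, hence is conjugate to $\alpha$; Remark 4.12(2), applied to the isomorphism $\Phi: B \to K(E)$, produces an isomorphism $K(F) \cong K(H)$ under which the action on $K(H)$ induced by $w$ is conjugate to the action on $K(F)$ induced by $v$, hence to $\gamma$. Therefore $\alpha \sim_{H,w} \gamma$, provided $H$ is full.

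\textbf{Main obstacle.} The principal technical point is verifying fullness of $H = F \otimes_\Phi E$ over $A$. Inner products in $H$ take the form $\langle \xi_1, \Phi(\langle \eta_1, \eta_2 \rangle)\xi_2 \rangle$ with $\eta_i \in F$ and $\xi_i \in E$; fullness of $F$ together with the fact that $\Phi$ is an isomorphism onto $K(E)$ allows one to approximate $\Phi(\langle \eta_1, \eta_2\rangle)$ by rank-one operators $\theta_{\zeta_1,\zeta_2}$, reducing the inner products to products $\langle \xi_1, \zeta_1\rangle \langle \zeta_2, \xi_2\rangle$ of inner products in $E$. Fullness of $E$ and an approximate-identity argument at the level of each $C^*$-quotient $A_p$ then give density of $\langle E, E\rangle \cdot \langle E, E\rangle$ in $A$. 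For the strong version of the statement, Remark 4.7 and Proposition 4.11 guarantee that $\widetilde{u}$ and $w^{v,u}$ inherit continuity (and the inverse-limit property) from $u$ and $v$, so the same construction establishes that strong Morita equivalence is likewise an equivalence relation.
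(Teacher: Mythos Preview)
Your proof is correct and follows essentially the same route as the paper: reflexivity via $(A,\alpha)$, symmetry via the dual module $\widetilde{E}$ of Proposition~4.8 and Remark~4.9, and transitivity via the interior tensor product $F\otimes_\Phi E$ together with Proposition~4.11 and Remark~4.12.

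The only visible differences are cosmetic. For transitivity the paper tacitly identifies $B$ with $K(E)$ and writes $F\otimes_i E$ using the raw inclusion $i\colon K(E)\hookrightarrow L(E)$, whereas you keep the conjugacy isomorphism $\Phi\colon B\to K(E)$ explicit and verify the covariance condition through it; your formulation is a bit cleaner, since it makes clear why the covariance needed in Proposition~4.11 is exactly the intertwining relation $\Phi\circ\alpha^v_g=\beta^u_g\circ\Phi$. Likewise, the paper disposes of fullness of the tensor product by citing \cite[proof of Proposition~4.4]{6}, while you spell out the standard argument (surjectivity of $\Phi$ onto $K(E)$, reduction to products $\langle\xi_1,\zeta_1\rangle\langle\zeta_2,\xi_2\rangle$, and density of $\langle E,E\rangle\cdot\langle E,E\rangle$ in $A$). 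Both treatments establish the same facts by the same mechanism.
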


\begin{proof}
Let $\alpha $ be an action of $G$ on the locally $C^{\ast }$-algebra $A$.
Clearly, we can regard $\alpha $ as an action of $G$ on the Hilbert $A$%
-module $A$. Since the Hilbert $A$-module $A$ is full and since $K(A)$ is
isomorphic with $A,$ $\alpha \backsim _{A,\alpha }\alpha $. Therefore the
relation is reflexive.

From Proposition 4.8 and Remark 4.9, we conclude that the relation is
symmetric.

To show that the relation is transitive, let $\alpha ,\beta ,\gamma $ be
three actions of $G$ on the locally $C^{\ast }$-algebras $A,$ $B$ and $C$
such that $\alpha \backsim _{E,u}\beta $ and $\beta \backsim _{F,v}\gamma $.
By \cite[the proof of Proposition 4.4]{6}, $F\otimes _{i}E$, where $i$ is
the embedding of $K(E)$ in $L(E)$, is a full Hilbert $A$-module such that
the locally $C^{\ast }$-algebras $K(E)$ and $K(F\otimes _{i}E)$ are
isomorphic. Since%
\begin{equation*}
i\left( \beta _{g}^{u}\left( \theta _{\xi ,\eta }\right) \right)
=u_{g}i\left( \theta _{\xi ,\eta }\right) u_{g^{-1}}
\end{equation*}%
for all $g\in G$ and for all $\xi ,\eta \in E$, the action $u$ is covariant
with respect to $i$ relative to $\beta ^{u}.$ Then, from Proposition 4.11
and Remark 4.12, we conclude that the pair $\left( F\otimes
_{i}E,w^{u,v}\right) $ implements a Morita equivalence between $\alpha $ and 
$\gamma $.
\end{proof}

In the same way as in the proof of Proposition 4.13 and using Remark 4.7, we
obtain the following proposition.

\begin{proposition}
Strong Morita equivalence of continuous inverse limit group actions on
locally $C^{\ast }$-algebras is an equivalence relation.
\end{proposition}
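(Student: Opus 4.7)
The approach is to follow the proof of Proposition 4.13 step by step, inserting Remark 4.7 at each turn to upgrade Morita equivalence to strong Morita equivalence and to propagate the continuous inverse limit structure through the constructions. The key observation is that, by Remark 4.7, whenever $\alpha \thicksim^{s}_{E,u} \beta$ and $\alpha$ is a continuous inverse limit action, the action $u$ on $E$ and the induced action $\beta$ on $K(E)$ automatically inherit the continuous inverse limit property. This will let us apply Proposition 4.8 and Proposition 4.11 with their continuity/inverse-limit hypotheses verified for free.

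For \emph{reflexivity}, given a continuous inverse limit action $\alpha$ of $G$ on $A$, I would regard $\alpha$ itself as an action of $G$ on the Hilbert $A$-module $A$ (it is manifestly continuous and an inverse limit action). Since the Hilbert $A$-module $A$ is full and $K(A) \cong A$, the induced actions on $A$ and $K(A)$ are conjugate to $\alpha$, and so $\alpha \thicksim^{s}_{A,\alpha} \alpha$.

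For \emph{symmetry}, suppose $\alpha \thicksim^{s}_{E,u} \beta$ with $\alpha$ continuous inverse limit. By Remark 4.7, $u$ is a continuous inverse limit action on $E$ and $\beta$ is a continuous inverse limit action on $B$. Proposition 4.8 then yields the continuous inverse limit action $\widetilde{u}$ of $G$ on the full Hilbert $K(E)$-module $\widetilde{E} = K(E,A)$. By Remark 4.9 the action induced on $K(\widetilde{E}) \cong A$ by $\widetilde{u}$ is conjugate with $\alpha$ (via $u$), while the action induced on $K(E)$ by $\widetilde{u}$ coincides with $\beta^{u}$, which is conjugate with $\beta$. Combining these with the transport of conjugacy provided by Lemma 4.3 and Proposition 4.5, we obtain $\beta \thicksim^{s}_{\widetilde{E}, \widetilde{u}} \alpha$.

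For \emph{transitivity}, assume $\alpha \thicksim^{s}_{E,u} \beta$ and $\beta \thicksim^{s}_{F,v} \gamma$ with $\alpha$ a continuous inverse limit action. Applying Remark 4.7 twice (first to the pair $(\alpha,\beta)$, then to $(\beta,\gamma)$) gives that $u, \beta, v, \gamma$ are all continuous inverse limit. I would then copy the construction from Proposition 4.13: put $i : K(E) \hookrightarrow L(E)$, form the full Hilbert $A$-module $F \otimes_{i} E$ with $K(F \otimes_{i} E) \cong K(F)$, and define the tensor action $w^{u,v}$. The covariance relation $i(\beta_{g}^{u}(\theta_{\xi,\eta})) = u_{g} i(\theta_{\xi,\eta}) u_{g^{-1}}$ shows that $u$ is $i$-covariant relative to $\beta^{u}$, so Proposition 4.11 applies; since $u$ is continuous and $v$ is continuous inverse limit, the conclusion of that proposition is precisely that $w^{u,v}$ is a continuous inverse limit action of $G$ on $F \otimes_{i} E$. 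Remark 4.12 then identifies the actions induced on $A$ and $K(F \otimes_{i} E) \cong K(F)$ by $w^{u,v}$ with $\alpha$ and $\gamma$ respectively up to conjugacy, so $\alpha \thicksim^{s}_{F \otimes_{i} E, \, w^{u,v}} \gamma$.

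The only delicate point is verifying that the construction in the transitivity step preserves the continuous inverse limit property; this is exactly where Remark 4.7 is indispensable, since without it we could not guarantee that $v$ is a continuous inverse limit action, which is needed to invoke the second part of Proposition 4.11. Once this is in place, everything else reduces to routine verifications that are already contained in the proof of Proposition 4.13.
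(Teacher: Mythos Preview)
Your proposal is correct and is exactly the paper's approach: the paper's entire proof is the sentence ``In the same way as in the proof of Proposition 4.13 and using Remark 4.7,'' and you have faithfully expanded that sketch. The only cosmetic slip is the labeling of which of $u$, $v$ must be continuous versus continuous inverse limit when invoking Proposition~4.11 for $F\otimes_i E$ (the roles are swapped relative to that proposition's notation), but this is harmless since Remark~4.7 makes both $u$ and $v$ continuous inverse limit actions anyway.
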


\section{Crossed products of locally $C^{\ast }$-algebras associated with
strong Morita equivalent actions}

Let $G$ be a locally compact group and let $E$ be a Hilbert module over a
locally $C^{*}$-algebra $A.$

Let $C_{c}(G,E)$ be the vector space of all continuous functions from $G$ to 
$E$ with compact support.

\begin{remark}
If $\widehat{\xi },\widehat{\eta }\in C_{c}(G,E),$ then the function $%
\left\langle \widehat{\xi },\widehat{\eta }\right\rangle :G\rightarrow A$
defined by $\left\langle \widehat{\xi },\widehat{\eta }\right\rangle \left(
s\right) =\left\langle \widehat{\xi }(s),\widehat{\eta }(s)\right\rangle $
is with compact support.
\end{remark}

\begin{lemma}
Let $\widehat{\xi }\in C_{c}(G,E).$ Then there is a unique element $%
\int\limits_{G}\widehat{\xi }(s)ds\in E,$ such that 
\begin{equation*}
\left\langle \int\limits_{G}\widehat{\xi }(s)ds,\eta \right\rangle
=\int\limits_{G}\left\langle \widehat{\xi }(s),\eta \right\rangle ds
\end{equation*}%
for all $\eta \in E.$ Moreover,

(1) $\overline{p}_{E}\left( \int\limits_{G}\widehat{\xi }(s)ds\right) \leq
M_{\widehat{\xi }}\sup \{\overline{p}_{E}\left( \widehat{\xi }(s)\right)
;s\in G\}$ for some positive number $M_{\widehat{\xi }};$

(2) $\left( \int\limits_{G}\widehat{\xi }(s)ds\right) a=\int\limits_{G}%
\widehat{\xi }(s)ads$ for all $a\in A;$

(3) $T\left( \int\limits_{G}\widehat{\xi }(s)ds\right)
=\int\limits_{G}T\left( \widehat{\xi }(s)\right) ds$ for all $T\in L(E,F).$
\end{lemma}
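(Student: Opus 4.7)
The plan is to reduce to the Hilbert $C^{\ast}$-module case by using the inverse-limit identification $E\cong \lim\limits_{\underset{p\in S(A)}{\leftarrow }}E_{p}$ and constructing the integral componentwise. For each $p\in S(A)$, the composition $\sigma _{p}^{E}\circ \widehat{\xi }:G\rightarrow E_{p}$ is continuous with compact support, so the ordinary Bochner integral $\xi _{p}:=\int_{G}\sigma _{p}^{E}(\widehat{\xi }(s))\,ds$ exists in the Banach space $E_{p}$. For $p\geq q$ in $S(A)$, the canonical map $\sigma _{pq}^{E}:E_{p}\rightarrow E_{q}$ is contractive, so commutation of Bochner integrals with bounded linear maps yields $\sigma _{pq}^{E}(\xi _{p})=\xi _{q}$. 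Hence $(\xi _{p})_{p}$ is a coherent family and defines an element $\int_{G}\widehat{\xi }(s)\,ds\in E$.

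To verify the characterizing identity, fix $\eta \in E$ and $p\in S(A)$. Using that $\sigma _{p}^{E}$ intertwines the $E$-inner product with $\pi _{p}^{A}$, the continuity of the inner product, and Lemma 2.2(3) applied to the morphism $\pi _{p}^{A}$, one computes
\begin{equation*}
\pi _{p}^{A}\left( \left\langle \int_{G}\widehat{\xi }(s)\,ds,\eta \right\rangle \right) =\left\langle \xi _{p},\sigma _{p}^{E}(\eta )\right\rangle =\int_{G}\left\langle \sigma _{p}^{E}(\widehat{\xi }(s)),\sigma _{p}^{E}(\eta )\right\rangle ds=\pi _{p}^{A}\left( \int_{G}\left\langle \widehat{\xi }(s),\eta \right\rangle ds\right) .
\end{equation*}
Since this holds for every $p\in S(A)$ and $\{\pi _{p}^{A}\}_{p\in S(A)}$ is jointly separating, the defining identity follows. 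Uniqueness is immediate: if $\zeta \in E$ satisfies $\left\langle \zeta ,\eta \right\rangle =0$ for all $\eta \in E$, taking $\eta =\zeta $ forces $\zeta =0$.

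Property (1) is the standard Bochner estimate applied in each $E_{p}$, with $M_{\widehat{\xi }}$ taken as the Haar measure of $\operatorname{supp}(\widehat{\xi })$, which is independent of $p$. Property (2) follows by applying the bounded linear map $\zeta \mapsto \zeta \pi _{p}^{A}(a)$ inside the Bochner integral on each $E_{p}$ and passing to the inverse limit. Property (3) follows from the uniqueness statement combined with
\begin{equation*}
\left\langle T\bigl( \int_{G}\widehat{\xi }(s)\,ds\bigr) ,\eta \right\rangle =\left\langle \int_{G}\widehat{\xi }(s)\,ds,T^{\ast }\eta \right\rangle =\int_{G}\left\langle \widehat{\xi }(s),T^{\ast }\eta \right\rangle ds=\int_{G}\left\langle T(\widehat{\xi }(s)),\eta \right\rangle ds,
\end{equation*}
valid for all $\eta \in F$, where the defining identity is used twice and $T$ is adjointable. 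The only delicate point is verifying compatibility of the componentwise Bochner integrals across the inverse system; once one observes that each $\sigma _{pq}^{E}$ is contractive, the rest is routine bookkeeping.
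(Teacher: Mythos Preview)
Your argument is correct and takes a genuinely different route from the paper's. The paper works intrinsically in $E$: it defines the linear functional $T(\widehat{\xi})(\eta)=\int_{G}\langle\widehat{\xi}(s),\eta\rangle\,ds$, shows directly that $T(\widehat{\xi})\in K(E,A)$ by approximating $\widehat{\xi}$ with elementary tensors $\xi\otimes f$, and then invokes the Riesz-type identification of $K(E,A)$ with $E$ (from \cite[Lemma~2.1.3 and Corollary~1.2.3]{10}) to produce the element $\int_{G}\widehat{\xi}(s)\,ds$. Your approach instead exploits the inverse-limit description $E\cong\lim\limits_{\leftarrow}E_{p}$, builds the integral componentwise as a Bochner integral in each Banach space $E_{p}$, and glues via the contractive connecting maps~$\sigma_{pq}^{E}$. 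Your method is arguably more elementary, since it bypasses the compact-operator machinery and reduces everything to standard Banach-space integration; the paper's method, on the other hand, is self-contained within the Hilbert-module framework and does not appeal to the inverse-limit decomposition for the existence step. For property~(3) the two arguments coincide: both use adjointability of $T$ and the characterizing identity, exactly as you wrote.
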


\begin{proof}
Consider the map $T\left( \widehat{\xi }\right) :E\rightarrow A$ defined by 
\begin{equation*}
T\left( \widehat{\xi }\right) \left( \eta \right)
=\int\limits_{G}\left\langle \widehat{\xi }\left( s\right) ,\eta
\right\rangle ds.
\end{equation*}%
By Lemma 2.2, $T\left( \widehat{\xi }\right) $ is a module morphism. Let $%
p\in S(A)$. Then 
\begin{eqnarray*}
\widetilde{p}_{L(E,A)}\left( T\left( \widehat{\xi }\right) \right)  &=&\sup
\{p\left( T\left( \widehat{\xi }\right) \left( \eta \right) \right) ;%
\overline{p}_{E}(\eta )\leq 1\} \\
&=&\sup \{p\left( \int\limits_{G}\left\langle \widehat{\xi }\left( s\right)
,\eta \right\rangle ds\right) ;\overline{p}_{E}(\eta )\leq 1\} \\
&\leq &M_{\widehat{\xi }}\sup \{\sup \{p\left( \left\langle \widehat{\xi }%
\left( s\right) ,\eta \right\rangle \right) ;s\in G\};\overline{p}_{E}(\eta
)\leq 1\} \\
&\leq &M_{\widehat{\xi }}\sup \{\overline{p}_{E}\left( \widehat{\xi }\left(
s\right) \right) ;s\in G\}
\end{eqnarray*}%
where $M_{\widehat{\xi }}=\int_{G_{\widehat{\xi }}}ds,$ $G_{\widehat{\xi }}=$
supp$\widehat{\xi }$.

If $\widehat{\xi }=\xi \otimes f,$ $\xi \in E$ and $f\in C_{c}(G),$ it is
not difficult to check that $T\left( \widehat{\xi }\right) =T_{\zeta },$
where $\zeta =\left( \int_{G}f(s)ds\right) \xi $ and $T_{\zeta }\left( \eta
\right) =\left\langle \zeta ,\eta \right\rangle $ for all $\eta \in E.$
Therefore $T\left( \widehat{\xi }\right) \in K(E,A)$.

Now suppose that $\widehat{\xi }\in C_{c}(G,E)$. For $\varepsilon >0$ and $%
p\in S(A)$, there exist $\xi _{i}\in E$ and $f_{i}\in C_{c}(G),$ $%
i=1,2,...,n,$ such that 
\begin{equation*}
\sup \{\overline{p}_{E}\left( \widehat{\xi }\left( s\right)
-\sum\limits_{i=1}^{n}f_{i}(s)\xi _{i}\right) ,s\in G\}\leq \varepsilon /M_{%
\widehat{\xi }}.
\end{equation*}%
Then 
\begin{eqnarray*}
\widetilde{p}_{L(E,A)}\left( T\left( \widehat{\xi }\right) -T\left(
\sum\limits_{i=1}^{n}\xi _{i}\otimes f_{i}\right) \right) &\leq &M_{\widehat{%
\xi }}\sup \{\overline{p}_{E}\left( \widehat{\xi }\left( s\right)
-\sum\limits_{i=1}^{n}f_{i}(s)\xi _{i}\right) ,s\in G\} \\
&\leq &\varepsilon .
\end{eqnarray*}%
From these facts we conclude that $T\left( \widehat{\xi }\right) \in K(E,A)$%
. Therefore there is a unique element $\int\limits_{G}\widehat{\xi }(s)ds\in
E$ such that 
\begin{equation*}
T\left( \widehat{\xi }\right) \left( \eta \right) =\left\langle
\int\limits_{G}\widehat{\xi }(s)ds,\eta \right\rangle
\end{equation*}%
and so 
\begin{equation*}
\left\langle \int\limits_{G}\widehat{\xi }(s)ds,\eta \right\rangle
=\int\limits_{G}\left\langle \widehat{\xi }(s),\eta \right\rangle ds
\end{equation*}%
for all $\eta \in E$. Moreover, $\widetilde{p}_{L(E,A)}\left( T\left( 
\widehat{\xi }\right) \right) =\overline{p}_{E}\left( \int\limits_{G}%
\widehat{\xi }(s)ds\right) $ \cite[Lemma 2.1.3 and Corollary 1.2.3 ]{10},
and thus the relation (1) is verified.

Let $a\in A$. Then 
\begin{eqnarray*}
\left\langle \int\limits_{G}\widehat{\xi }(s)ads,\eta \right\rangle
&=&\int\limits_{G}\left\langle \widehat{\xi }(s)a,\eta \right\rangle
ds=a^{\ast }\int\limits_{G}\left\langle \widehat{\xi }(s),\eta \right\rangle
ds \\
&=&a^{\ast }\left\langle \int\limits_{G}\widehat{\xi }(s)ds,\eta
\right\rangle =\left\langle \left( \int\limits_{G}\widehat{\xi }(s)ds\right)
a,\eta \right\rangle
\end{eqnarray*}%
for all $\eta \in E$. This implies that 
\begin{equation*}
\int\limits_{G}\widehat{\xi }(s)ads=\left( \int\limits_{G}\widehat{\xi }%
(s)ds\right) a
\end{equation*}%
and thus the relation (2) is verified. Let $T\in L(E,F)$. From 
\begin{eqnarray*}
\left\langle T\left( \int\limits_{G}\widehat{\xi }(s)ds\right) ,\eta
\right\rangle &=&\left\langle \int\limits_{G}\widehat{\xi }(s)ds,T^{\ast
}(\eta )\right\rangle =\int\limits_{G}\left\langle \widehat{\xi }(s),T^{\ast
}(\eta )\right\rangle ds \\
&=&\int\limits_{G}\left\langle T\left( \widehat{\xi }(s)\right) ,\eta
)\right\rangle ds=\left\langle \int\limits_{G}T\left( \widehat{\xi }%
(s)\right) ds,\eta )\right\rangle
\end{eqnarray*}%
we conclude that $T\left( \int\limits_{G}\widehat{\xi }(s)ds\right)
=\int\limits_{G}T\left( \widehat{\xi }(s)\right) ds,$ and then the relation
(3) is verified too.
\end{proof}

Let $g\rightarrow u_{g}$ be a continuous inverse limit action of $G$ on a
full Hilbert $A$-module $E$. We can suppose that $u_{g}=\lim\limits_{%
\underset{p\in S(A)}{\leftarrow }}u_{g}^{p}$ for each $g\in G$, where $%
g\rightarrow u_{g}^{p}$, $p\in S(A)$ are continuous actions of $G$ on $E_{p}$%
.

Let $\widehat{\xi }\in C_{c}(G,E)$ and $f\in C_{c}(G,A)$. It is not
difficult to check that the function 
\begin{equation*}
\left( s,t\right) \rightarrow \widehat{\xi }\left( s\right) \alpha
_{s}^{u}\left( f\left( s^{-1}t\right) \right)
\end{equation*}%
from $G\times G$ to $E$ is continuous with compact support and the formula 
\begin{equation*}
t\rightarrow \left( \widehat{\xi }\cdot f\right) \left( t\right)
=\int\limits_{G}\widehat{\xi }\left( s\right) \alpha _{s}^{u}\left( f\left(
s^{-1}t\right) \right) ds,\text{ }t\in G
\end{equation*}%
defines an element $\widehat{\xi }\cdot f\in C_{c}(G,E)$. Thus, we have
defined a map from $C_{c}(G,A)\times C_{c}(G,E)\rightarrow C_{c}(G,E)$ by 
\begin{equation*}
\left( \widehat{\xi },f\right) \mapsto \widehat{\xi }\cdot f.
\end{equation*}%
It is not difficult to check that this map is $\mathbb{C}$ -linear with
respect to its variables. Let $\widehat{\xi }\in C_{c}(G,E)$ and $f,h\in $ $%
C_{c}(G,A)$. From 
\begin{eqnarray*}
\left( \widehat{\xi }\cdot \left( f\times h\right) \right) \left( t\right)
&=&\int\limits_{G}\widehat{\xi }\left( s\right) \alpha _{s}^{u}\left( \left(
f\times h\right) \left( s^{-1}t\right) \right) ds \\
&=&\int\limits_{G}\widehat{\xi }\left( s\right) \alpha _{s}^{u}\left(
\int\limits_{G}f\left( r\right) \alpha _{r}^{u}\left( h\left(
r^{-1}s^{-1}t\right) \right) dr\right) ds \\
&=&\int\limits_{G}\widehat{\xi }\left( s\right) \left( \int\limits_{G}\alpha
_{s}^{u}\left( f\left( s^{-1}g\right) \right) \alpha _{g}^{u}\left( h\left(
g^{-1}t\right) \right) dg\right) ds \\
&=&\int\limits_{G}\left( \int\limits_{G}\widehat{\xi }\left( s\right) \alpha
_{s}^{u}\left( f\left( s^{-1}g\right) \right) \alpha _{r}^{u}\left( h\left(
g^{-1}t\right) \right) dg\right) ds
\end{eqnarray*}%
and 
\begin{eqnarray*}
\left( \left( \widehat{\xi }\cdot f\right) \cdot h\right) \left( t\right)
&=&\int\limits_{G}\left( \widehat{\xi }\cdot f\right) \left( s\right) \alpha
_{s}^{u}\left( h\left( s^{-1}t\right) \right) ds \\
&=&\int\limits_{G}\left( \int\limits_{G}\widehat{\xi }\left( r\right) \alpha
_{r}^{u}\left( f\left( r^{-1}s\right) \right) dr\right) \alpha
_{s}^{u}\left( h\left( s^{-1}t\right) \right) ds \\
&=&\int\limits_{G}\left( \int\limits_{G}\widehat{\xi }\left( r\right) \alpha
_{r}^{u}\left( f\left( r^{-1}s\right) \right) \alpha _{s}^{u}\left( h\left(
s^{-1}t\right) \right) dr\right) ds \\
&=&\int\limits_{G}\left( \int\limits_{G}\widehat{\xi }\left( r\right) \alpha
_{r}^{u}\left( f\left( r^{-1}s\right) \right) \alpha _{s}^{u}\left( h\left(
s^{-1}t\right) \right) ds\right) dr
\end{eqnarray*}%
for all $t\in G$, we conclude that 
\begin{equation*}
\widehat{\xi }\cdot \left( f\times h\right) =\left( \widehat{\xi }\cdot
f\right) \cdot h.
\end{equation*}%
Therefore $C_{c}(G,E)$ has a structure of right $C_{c}(G,A)$-module.

Let $\widehat{\xi },\widehat{\eta }\in C_{c}(G,E)$. It is not difficult to
check that the function 
\begin{equation*}
\left( s,t\right) \rightarrow \alpha _{s^{-1}}^{u}\left( \left\langle 
\widehat{\xi }\left( s\right) ,\widehat{\eta }\left( st\right) \right\rangle
\right)
\end{equation*}%
from $G\times G$ to $A$ is continuous with compact support and the formula 
\begin{equation*}
t\rightarrow \int\limits_{G}\alpha _{s^{-1}}^{u}\left( \left\langle \widehat{%
\xi }\left( s\right) ,\widehat{\eta }\left( st\right) \right\rangle \right)
ds
\end{equation*}%
defines an element in $C_{c}(G,A)$. Thus, we have defined a $C_{c}(G,A)$%
-valued inner product $\left\langle \cdot ,\cdot \right\rangle
_{C_{c}(G,E)}:C_{c}(G,E)\times C_{c}(G,E)\rightarrow C_{c}(G,A)$ by 
\begin{equation*}
\left\langle \widehat{\xi },\widehat{\eta }\right\rangle
_{C_{c}(G,E)}(t)=\int\limits_{G}\alpha _{s^{-1}}^{u}\left( \left\langle 
\widehat{\xi }\left( s\right) ,\widehat{\eta }\left( st\right) \right\rangle
\right) ds\text{.}
\end{equation*}%
It is not difficult to check that the inner product defined above is $%
\mathbb{C}$-linear with respect to its second variable.

Let $f\in C_{c}(G,A),$ $\widehat{\xi },\widehat{\eta }\in C_{c}(G,E)$. From

\begin{eqnarray*}
\left\langle \widehat{\xi },\widehat{\eta }\cdot f\right\rangle
_{C_{c}(G,E)}(t) &=&\int\limits_{G}\alpha _{s^{-1}}^{u}\left( \left\langle 
\widehat{\xi }\left( s\right) ,\left( \widehat{\eta }\cdot f\right) \left(
st\right) \right\rangle \right) ds \\
&=&\int\limits_{G}\alpha _{s^{-1}}^{u}\left( \left\langle \widehat{\xi }%
\left( s\right) ,\int\limits_{G}\widehat{\eta }\left( g\right) \alpha
_{g}^{u}\left( f\left( g^{-1}st\right) \right) dg\right\rangle \right) ds \\
&=&\int\limits_{G}\alpha _{s^{-1}}^{u}\left( \int\limits_{G}\left(
\left\langle \widehat{\xi }\left( s\right) ,\widehat{\eta }\left( g\right)
\alpha _{g}^{u}\left( f\left( g^{-1}st\right) \right) \right\rangle \right)
dg\right) ds \\
&=&\int\limits_{G}\left( \int\limits_{G}\alpha _{s^{-1}}^{u}\left(
\left\langle \widehat{\xi }\left( s\right) ,\widehat{\eta }\left( g\right)
\alpha _{g}^{u}\left( f\left( g^{-1}st\right) \right) \right\rangle \right)
dg\right) ds \\
&=&\int\limits_{G}\left( \int\limits_{G}\alpha _{s^{-1}}^{u}\left(
\left\langle \widehat{\xi }\left( s\right) ,\widehat{\eta }\left( g\right)
\right\rangle \right) \alpha _{s^{-1}g}^{u}\left( f\left( g^{-1}st\right)
\right) dg\right) ds
\end{eqnarray*}%
and%
\begin{eqnarray*}
\left( \left\langle \widehat{\xi },\widehat{\eta }\right\rangle
_{C_{c}(G,E)}\times f\right) (t) &=&\int\limits_{G}\left\langle \widehat{\xi 
},\widehat{\eta }\right\rangle _{C_{c}(G,A)}(s)\alpha _{s}^{u}\left( f\left(
s^{-1}t\right) \right) ds \\
&=&\int\limits_{G}\left( \int\limits_{G}\alpha _{r^{-1}}^{u}\left(
\left\langle \widehat{\xi }\left( r\right) ,\widehat{\eta }\left( rs\right)
\right\rangle \right) dr\right) \alpha _{s}^{u}\left( f\left( s^{-1}t\right)
\right) ds \\
&=&\int\limits_{G}\left( \int\limits_{G}\alpha _{r^{-1}}^{u}\left(
\left\langle \widehat{\xi }\left( r\right) ,\widehat{\eta }\left( rs\right)
\right\rangle \right) \alpha _{s}^{u}\left( f\left( s^{-1}t\right) \right)
dr\right) ds \\
&&\text{Fubini's Theorem } \\
&=&\int\limits_{G}\left( \int\limits_{G}\alpha _{r^{-1}}^{u}\left(
\left\langle \widehat{\xi }\left( r\right) ,\widehat{\eta }\left( rs\right)
\right\rangle \right) \alpha _{s}^{u}\left( f\left( s^{-1}t\right) \right)
ds\right) dr \\
&=&\int\limits_{G}\left( \int\limits_{G}\alpha _{r^{-1}}^{u}\left(
\left\langle \widehat{\xi }\left( r\right) ,\widehat{\eta }\left( g\right)
\right\rangle \right) \alpha _{r^{-1}g}^{u}\left( f\left( g^{-1}rt\right)
\right) dg\right) dr
\end{eqnarray*}%
for all $t\in G$, we deduce that 
\begin{equation*}
\left\langle \widehat{\xi },\widehat{\eta }\cdot f\right\rangle
_{C_{c}(G,E)}=\left\langle \widehat{\xi },\widehat{\eta }\right\rangle
_{C_{c}(G,E)}\times f\text{.}
\end{equation*}

From 
\begin{eqnarray*}
\left( \left\langle \widehat{\xi },\widehat{\eta }\right\rangle
_{C_{c}(G,E)}\right) ^{\#}(t) &=&\Delta \left( t\right) ^{-1}\alpha
_{t}^{u}\left( \left( \left\langle \widehat{\xi },\widehat{\eta }%
\right\rangle _{C_{c}(G,E)}(t^{-1})\right) ^{\ast }\right) \\
&=&\Delta \left( t\right) ^{-1}\alpha _{t}^{u}\left( \left(
\int\limits_{G}\alpha _{s^{-1}}^{u}\left( \left\langle \widehat{\xi }\left(
s\right) ,\widehat{\eta }\left( st^{-1}\right) \right\rangle \right)
ds\right) ^{\ast }\right) \\
&=&\Delta \left( t\right) ^{-1}\alpha _{t}^{u}\left( \int\limits_{G}\alpha
_{s^{-1}}^{u}\left( \left\langle \widehat{\eta }\left( st^{-1}\right) ,%
\widehat{\xi }\left( s\right) \right\rangle \right) ds\right) \\
&=&\Delta \left( t\right) ^{-1}\int\limits_{G}\alpha _{ts^{-1}}^{u}\left(
\left\langle \widehat{\eta }\left( st^{-1}\right) ,\widehat{\xi }\left(
s\right) \right\rangle \right) ds \\
&=&\Delta \left( t\right) ^{-1}\int\limits_{G}\alpha _{g^{-1}}^{u}\left(
\left\langle \widehat{\eta }\left( g\right) ,\widehat{\xi }\left( gt\right)
\right\rangle \right) \Delta \left( t\right) dg \\
&=&\left\langle \widehat{\eta },\widehat{\xi }\right\rangle _{C_{c}(G,E)}(t)
\end{eqnarray*}%
for all $t\in G$, we deduce that 
\begin{equation*}
\left( \left\langle \widehat{\xi },\widehat{\eta }\right\rangle
_{C_{c}(G,E)}\right) ^{\#}=\left\langle \widehat{\eta },\widehat{\xi }%
\right\rangle _{C_{c}(G,E)}.
\end{equation*}

Let $\widehat{\xi }\in C_{c}(G,E)$ and $p\in S(A).$ Then 
\begin{eqnarray*}
\left( \pi _{p}^{A}\circ \left\langle \widehat{\xi },\widehat{\xi }%
\right\rangle _{C_{c}(G,E)}\right) (t) &=&\pi _{p}^{A}\left(
\int\limits_{G}\alpha _{s^{-1}}^{u}\left( \left\langle \widehat{\xi }\left(
s\right) ,\widehat{\xi }\left( st\right) \right\rangle \right) ds\right) \\
&=&\int\limits_{G}\pi _{p}^{A}\left( \alpha _{s^{-1}}^{u}\left( \left\langle 
\widehat{\xi }\left( s\right) ,\widehat{\xi }\left( st\right) \right\rangle
\right) \right) ds \\
&=&\int\limits_{G}\alpha _{s^{-1}}^{u^{p}}\left( \left\langle \left( \sigma
_{p}^{E}\circ \widehat{\xi }\right) \left( s\right) ,\left( \sigma
_{p}^{E}\circ \widehat{\xi }\right) \left( st\right) \right\rangle \right) ds
\\
&=&\left( \left\langle \sigma _{p}^{E}\circ \widehat{\xi },\sigma
_{p}^{E}\circ \widehat{\xi }\right\rangle _{C_{c}(G,E_{p})}\right) \left(
t\right)
\end{eqnarray*}%
for all $t\in G$. From this fact and \cite[Remark pp. 300 ]{2}, we deduce
that $\pi _{p}^{A}\circ \left\langle \widehat{\xi },\widehat{\xi }%
\right\rangle _{C_{c}(G,E)}$ is a positive element in $C_{c}(G,A_{p})$ for
all $p\in S(A)$. Therefore $\left\langle \widehat{\xi },\widehat{\xi }%
\right\rangle _{C_{c}(G,E)}$ is a positive element in $C_{c}(G,A)$.

Let $\widehat{\xi }\in C_{c}(G,A)$ such that $\left\langle \widehat{\xi },%
\widehat{\xi }\right\rangle _{C_{c}(G,E)}=0.$ Then $\pi _{p}^{A}\circ
\left\langle \widehat{\xi },\widehat{\xi }\right\rangle _{C_{c}(G,E)}=0$ for
all $p\in S(A),$ and since $\pi _{p}^{A}\circ \left\langle \widehat{\xi },%
\widehat{\xi }\right\rangle _{C_{c}(G,E)}=\left\langle \sigma _{p}^{E}\circ 
\widehat{\xi },\sigma _{p}^{E}\circ \widehat{\xi }\right\rangle
_{C_{c}(G,E_{p})}$ for all $p\in S(A),$ we have $\left\langle \sigma
_{p}^{E}\circ \widehat{\xi },\sigma _{p}^{E}\circ \widehat{\xi }%
\right\rangle _{C_{c}(G,E_{p})}=0$ for all $p\in S(A)$. From this fact and 
\cite[Remark pp. 300 ]{2}, we deduce that $\sigma _{p}^{E}\circ \widehat{\xi 
}=0$ for all $p\in S(A)$ and so $\widehat{\xi }=0$. Therefore $\left\langle 
\widehat{\xi },\widehat{\xi }\right\rangle _{C_{c}(G,E)}=0$ if and only of $%
\widehat{\xi }=0$.

Thus we showed that $C_{c}(G,E)$ is a right $C_{c}(G,A)$-module equipped
with a $C_{c}(G,A)$-valued inner product which is $\mathbb{C}$-and $%
C_{c}(G,A)$-linear in its second variable and verify the relations $(1),(2)$
and $(3)$ of Definition 2.1.

Let $E^{G}$ be the completion of $C_{c}(G,E)$ with respect to the topology
determined by the inner product if we consider on $C_{c}(G,A)$ the topology
determined by the family of $C^{\ast }$-seminorms $\{n_{p}\}_{p\in S(A)}$.
Then $E^{G}$ is a Hilbert $G\times _{\alpha ^{u}}A$-module \cite[Remark
1.2.10]{10}.

\begin{lemma}
Let $p\in S(A)$. Then the Hilbert $G\times _{\alpha ^{u^{p}}}A_{p}$-modules $%
\left( E^{G}\right) _{p}$ and $E_{p}^{G}$ are unitarily equivalent.
\end{lemma}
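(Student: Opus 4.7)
The plan is to construct the unitary by lifting the pointwise quotient map $\sigma_p^E\colon E\to E_p$ to the level of $C_c(G,-)$. First, I define $\Psi_p\colon C_c(G,E)\to C_c(G,E_p)$ by $\Psi_p(\widehat\xi)=\sigma_p^E\circ\widehat\xi$. Using the intertwining relations $\sigma_p^E\circ u_g=u_g^p\circ\sigma_p^E$ and $\pi_p^A\circ\alpha_g^u=\alpha_g^{u^p}\circ\pi_p^A$, together with the computation of $\pi_p^A\circ\langle\widehat\xi,\widehat\xi\rangle_{C_c(G,E)}$ already performed in the discussion immediately preceding the lemma, the map $\Psi_p$ is compatible with $f\mapsto\pi_p^A\circ f$ from $C_c(G,A)$ to $C_c(G,A_p)$; in particular it intertwines the right module actions and satisfies
\[
\langle \Psi_p(\widehat\xi),\Psi_p(\widehat\eta)\rangle_{C_c(G,E_p)} \;=\; \pi_p^A\circ\langle \widehat\xi,\widehat\eta\rangle_{C_c(G,E)}.
\]

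Next I would establish the seminorm identity $\overline{n_p}_{E^G}(\widehat\xi)=\|\Psi_p(\widehat\xi)\|_{E_p^G}$. Because $G\times_{\alpha^u}A$ is isomorphic to $\varprojlim_{p\in S(A)}G\times_{\alpha^{u^p}}A_p$, the $C^\ast$-seminorm $n_p$ on $C_c(G,A)$ coincides with the $C^\ast$-norm on $G\times_{\alpha^{u^p}}A_p$ pushed back via the quotient $\pi_p^{G\times A}$, which on $C_c(G,A)$ is simply composition with $\pi_p^A$. Combined with the inner-product identity above, this yields the desired seminorm equality. Consequently $\Psi_p$ factors through the quotient $C_c(G,E)/\ker\overline{n_p}_{E^G}$ and extends by continuity to a linear isometry $V_p\colon(E^G)_p\to E_p^G$ which, under the canonical identification $(G\times_{\alpha^u}A)_p\cong G\times_{\alpha^{u^p}}A_p$, preserves the $G\times_{\alpha^{u^p}}A_p$-valued inner product and the right module action.

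It remains to prove that $V_p$ is surjective. Since $C_c(G,E_p)$ is dense in $E_p^G$, it suffices to approximate each $\widehat\zeta\in C_c(G,E_p)$ by elements of $\Psi_p(C_c(G,E))$ in the inductive-limit topology. Given $\varepsilon>0$ and the compact set $K=\mathrm{supp}(\widehat\zeta)$, at each $t\in K$ I would lift $\widehat\zeta(t)$ to some $\xi_t\in E$ (using surjectivity of $\sigma_p^E$) and use joint continuity to find an open neighborhood $U_t$ of $t$ on which $\|\sigma_p^E(\xi_t)-\widehat\zeta(s)\|_{E_p}<\varepsilon$; a finite subcover and a subordinate partition of unity then produce $\widehat\xi\in C_c(G,E)$ with $\Psi_p(\widehat\xi)$ uniformly $\varepsilon$-close to $\widehat\zeta$ on $G$, which, after integrating over the fixed compact support, is more than enough in the $\|\cdot\|_{E_p^G}$-topology. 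A surjective linear isometry between Hilbert modules which preserves the inner product is automatically unitary, so $V_p$ is the required unitary equivalence.

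The main obstacle is this density step: the inner-product identity and the seminorm equality are essentially formal consequences of the compatibilities of $\sigma_p^E$ and $\pi_p^A$ with the actions, whereas the partition-of-unity lift is the point where one genuinely uses the surjectivity of $\sigma_p^E$, and it is there that any care with the inductive-limit topology must be taken.
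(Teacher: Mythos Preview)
Your argument is correct and follows the same blueprint as the paper: define the map $\widehat\xi\mapsto\sigma_p^E\circ\widehat\xi$ on $C_c(G,E)$, verify the inner-product identity $\langle\Psi_p(\widehat\xi),\Psi_p(\widehat\eta)\rangle_{C_c(G,E_p)}=\pi_p^A\circ\langle\widehat\xi,\widehat\eta\rangle_{C_c(G,E)}$, deduce the seminorm equality, and pass to the quotient and completion.

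The only point of divergence is the surjectivity step. You approximate an arbitrary $\widehat\zeta\in C_c(G,E_p)$ by a partition-of-unity lift, which works but is more than needed. The paper instead observes that for $\xi\in E$ and $f\in C_c(G)$ the elementary function $\widehat{\xi_f}(s)=f(s)\xi$ lies in $C_c(G,E)$ and satisfies $\Psi_p(\widehat{\xi_f})=\widehat{\sigma_p^E(\xi)_f}$; since $\sigma_p^E$ is onto, the image of $\Psi_p$ contains $E_p\otimes_{\mathrm{alg}}C_c(G)$, which is already dense in $E_p^G$. This bypasses the compactness/partition-of-unity machinery entirely. Your route has the advantage of showing density of the image inside $C_c(G,E_p)$ for the inductive-limit topology (a slightly stronger statement), whereas the paper's route is shorter and makes the surjectivity of $\sigma_p^E$ the only nontrivial ingredient.
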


\begin{proof}
Let $\widehat{\xi }\in C_{c}(G,E)$. Then 
\begin{eqnarray*}
\overline{n_{p}}\left( \widehat{\xi }\right) ^{2} &=&n_{p}\left(
\left\langle \widehat{\xi },\widehat{\xi }\right\rangle _{C_{c}(G,E)}\right) 
\\
&=&\sup \{\left\Vert \varphi \left( \left\langle \widehat{\xi },\widehat{\xi 
}\right\rangle _{C_{c}(G,E)}\right) \right\Vert ;\varphi \in \mathcal{R}%
_{p}(G\times _{\alpha ^{u}}A)\} \\
&=&\sup \{\left\Vert \varphi _{p}\left( \left\langle \sigma _{p}^{E}\circ 
\widehat{\xi },\sigma _{p}^{E}\circ \widehat{\xi }\right\rangle
_{C_{c}(G,E_{p})}\right) \right\Vert ;\varphi _{p}\in \mathcal{R}\left(
(G\times _{\alpha ^{u}}A)_{p}\right) \} \\
&=&\left\Vert \sigma _{p}^{E}\circ \widehat{\xi }\right\Vert
_{E_{p}^{G}}^{2}.
\end{eqnarray*}%
Thus we can define a linear map $U_{p}:C_{c}(G,E)/\ker \left( \overline{n_{p}%
}|_{C_{c}(G,E)}\right) $ $\rightarrow C_{c}(G,E_{p})$ by 
\begin{equation*}
U_{p}\left( \widehat{\xi }+\ker \left( \overline{n_{p}}|_{C_{c}(G,E)}\right)
\right) =\sigma _{p}^{E}\circ \widehat{\xi }.
\end{equation*}%
Moreover, 
\begin{eqnarray*}
&&\left\langle U_{p}\left( \widehat{\xi }+\ker \left( \overline{n_{p}}%
|_{C_{c}(G,E)}\right) \right) ,U_{p}\left( \widehat{\xi }+\ker \left( 
\overline{n_{p}}|_{C_{c}(G,E)}\right) \right) \right\rangle _{C_{c}(G,E_{p})}
\\
&=&\left\langle \sigma _{p}^{E}\circ \widehat{\xi },\sigma _{p}^{E}\circ 
\widehat{\xi }\right\rangle _{C_{c}(G,E_{p})} \\
&=&\pi _{p}^{A}\circ \left\langle \widehat{\xi },\widehat{\xi }\right\rangle
_{C_{c}(G,E)} \\
&=&\left\langle \widehat{\xi }+\ker \left( \overline{n_{p}}%
|_{C_{c}(G,E)}\right) ,\widehat{\xi }+\ker \left( \overline{n_{p}}%
|_{C_{c}(G,E)}\right) \right\rangle _{\left( C_{c}(G,E)\right) _{p}}
\end{eqnarray*}%
where $\left( C_{c}(G,E)\right) _{p}=C_{c}(G,E)/\ker \left( \overline{n_{p}}%
|_{C_{c}(G,E)}\right) $. For $\xi \in E$ and $f\in C_{c}(G)$, the map $%
s\rightarrow $ $\widehat{\xi _{f}},$ where $\widehat{\xi _{f}}(s)=f(s)\xi ,$
defines an element in $C_{c}(G,E)$ and 
\begin{equation*}
U_{p}\left( \widehat{\xi _{f}}+\ker \left( \overline{n_{p}}%
|_{C_{c}(G,E)}\right) \right) \left( t\right) =f(t)\sigma _{p}^{E}\left( \xi
\right) =\widehat{\sigma _{p}^{E}\left( \xi \right) _{f}}.
\end{equation*}%
From these facts, \cite[Theorem 3.5 ]{12} and taking into account that the
vector space $\left( C_{c}(G,E)\right) _{p}$ is dense in $\left(
E^{G}\right) _{p}$ and $E_{p}\otimes _{\text{alg}}C_{c}(G)$ is dense in $%
E_{p}^{G}$, we deduce that $U_{p}$ extends to a unitary operator from $%
\left( E^{G}\right) _{p}$ to $E_{p}^{G}$. Therefore the Hilbert $G\times
_{\alpha ^{u^{p}}}A_{p}$-modules $\left( E^{G}\right) _{p}$ and $E_{p}^{G}$
are unitarily equivalent.
\end{proof}

\begin{corollary}
The Hilbert $G\times _{\alpha ^{u}}A$-modules $E^{G}$ and $\lim\limits_{%
\underset{p\in S(A)}{\leftarrow }}E_{p}^{G}$ are unitarily equivalent and
the locally $C^{\ast }$-algebras $K\left( E^{G}\right) $ and $\lim\limits_{%
\underset{p\in S(A)}{\leftarrow }}K(E_{p}^{G})$ are isomorphic.
\end{corollary}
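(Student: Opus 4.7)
The plan is to obtain the first assertion as an inverse limit of the unitary equivalences produced by Lemma 5.3, and then to deduce the second assertion from the general identification $K(E^G) \cong \lim_{\leftarrow} K((E^G)_p)$ recalled in Section 2. So the whole argument is a bookkeeping exercise: I just have to check that the isomorphisms built $p$-by-$p$ are compatible with the connecting maps of the respective inverse systems.

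First, since $E^G$ is a Hilbert $(G\times_{\alpha^u}A)$-module and $\{n_p\}_{p\in S(A)}$ is a cofinal family of $C^{\ast}$-seminorms on $G\times_{\alpha^u}A$, the general identification recalled in Section 2 gives $E^G \cong \lim_{\leftarrow}(E^G)_p$ as Hilbert modules. By Lemma 5.3, for each $p\in S(A)$ there is a unitary $U_p:(E^G)_p\to E_p^G$ with $U_p\bigl(\widehat{\xi}+\ker(\overline{n_p}|_{C_c(G,E)})\bigr)=\sigma_p^E\circ\widehat{\xi}$ for $\widehat{\xi}\in C_c(G,E)$. I would check that for $p\ge q$ the diagram involving $U_p$, $U_q$, the connecting map $\sigma_{pq}^{E^G}:(E^G)_p\to(E^G)_q$ of the inverse system $\{(E^G)_p\}_p$, and the connecting map $E_p^G\to E_q^G$ induced by $\sigma_{pq}^E$ on the coefficient $C^{\ast}$-modules, commutes; on elements of $C_c(G,E)$ both maps act as $\widehat{\xi}\mapsto\sigma_q^E\circ\widehat{\xi}$, and the equality extends by continuity and by the density of $(C_c(G,E))_p$ in $(E^G)_p$. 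Thus $(U_p)_p$ is an inverse system of unitaries, and $U:=\lim_{\leftarrow} U_p$ is a unitary equivalence between $E^G\cong\lim_{\leftarrow}(E^G)_p$ and $\lim_{\leftarrow}E_p^G$.

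For the second statement, I would first use the general fact recalled in Section 2 that for any Hilbert module $F$ over a locally $C^{\ast}$-algebra $C$ one has $K(F)\cong\lim_{\leftarrow_{r\in S(C)}}K(F_r)$; applied to $F=E^G$ viewed over $C=G\times_{\alpha^u}A$ with the cofinal family $\{n_p\}_{p\in S(A)}$, this yields
\begin{equation*}
K(E^G)\cong\lim\limits_{\underset{p\in S(A)}{\leftarrow}}K((E^G)_p).
\end{equation*}
Since unitarily equivalent Hilbert modules have isomorphic algebras of compact operators, the unitaries $U_p:(E^G)_p\to E_p^G$ induce $C^{\ast}$-isomorphisms $K((E^G)_p)\to K(E_p^G)$, $T\mapsto U_pTU_p^{\ast}$. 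These form an inverse system of isomorphisms (using the compatibility of the $U_p$'s just verified), so their inverse limit is an isomorphism of locally $C^{\ast}$-algebras from $\lim_{\leftarrow}K((E^G)_p)$ onto $\lim_{\leftarrow}K(E_p^G)$. Composing gives $K(E^G)\cong\lim_{\leftarrow}K(E_p^G)$.

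The only genuinely technical point is the compatibility check for $(U_p)_p$ with the connecting maps of the two inverse systems. I expect the main obstacle to be simply unpacking notation: one has to identify the connecting map $\sigma_{pq}^{E^G}$ of the Hilbert $(G\times_{\alpha^u}A)$-module $E^G$, induced by the $C^{\ast}$-morphism $\pi_{pq}^{G\times_{\alpha^u}A}$, with the map on $C_c(G,E)$-classes that sends $\widehat{\xi}+\ker(\overline{n_p}|_{C_c(G,E)})$ to $\widehat{\xi}+\ker(\overline{n_q}|_{C_c(G,E)})$, and to match this with the connecting map $E_p^G\to E_q^G$ obtained by post-composition with $\sigma_{pq}^E$; once this identification is in place, commutativity is immediate on the dense subspace and the rest is a routine density/continuity argument.
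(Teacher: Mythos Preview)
Your proposal is correct and follows essentially the same approach as the paper: the paper also asserts that the unitaries $U_p$ from Lemma~5.3 form an inverse system and then passes to the inverse limit, exactly as you do; for the second assertion the paper simply invokes \cite[Proposition~4.7]{14} once the unitary equivalence $E^G\cong\lim_{\leftarrow}E_p^G$ is established, whereas you unpack this into the chain $K(E^G)\cong\lim_{\leftarrow}K((E^G)_p)\cong\lim_{\leftarrow}K(E_p^G)$ via conjugation by the $U_p$, which amounts to the same thing. Your explicit compatibility check for $(U_p)_p$ with the connecting maps is a detail the paper leaves implicit.
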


\begin{proof}
If $U_{p}$ is the unitary operator from $\left( E^{G}\right) _{p}$ to $%
E_{p}^{G}$ constructed in the proof of Lemma 5.3, for each $p\in S(A),$ then 
$\left( U_{p}\right) _{p}$ is an inverse system of unitary operators and so
the Hilbert $G\times _{\alpha ^{u}}A$-module $E^{G}$ and $\lim\limits_{%
\underset{p\in S(A)}{\leftarrow }}E_{p}^{G}$ are unitarily equivalent. By 
\cite[Proposition 4.7]{14}, the locally $C^{\ast }$-algebras $K\left(
E^{G}\right) $ and $\lim\limits_{\underset{p\in S(A)}{\leftarrow }%
}K(E_{p}^{G})$ are isomorphic.
\end{proof}

\begin{definition}
(\cite{6}) Two locally $C^{\ast }$-algebras $A$ and $B$ are strongly Morita
equivalent if there is a full Hilbert $A$-module $E$ such that the locally $%
C^{\ast }$-algebras $K(E)$ and $B$ are isomorphic.
\end{definition}

\begin{theorem}
Let $G$ be a locally compact group and let $\alpha $ and $\beta $ be two
continuous inverse limit actions of $G$ on the locally $C^{\ast }$-algebras $%
A$ and $B$. If the actions $\alpha $ and $\beta $ are strongly Morita
equivalent, then the crossed products $G\times _{\alpha }A$ and $G\times
_{\beta }B$ are strongly Morita equivalent.
\end{theorem}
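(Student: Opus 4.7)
The plan is to take the Hilbert $A$-module $E$ implementing the strong Morita equivalence and form the Hilbert module $E^{G}$ over the crossed product, then show it witnesses strong Morita equivalence of $G \times_{\alpha}A$ and $G \times_{\beta}B$ in the sense of Definition 5.5. That is, I would produce a full Hilbert $G \times_{\alpha}A$-module $X$ with $K(X) \cong G \times_{\beta}B$, taking $X = E^{G}$.

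The first step is to unpack the hypothesis. Since $\alpha \thicksim^{s}_{E,u} \beta$, there is a full Hilbert $A$-module $E$ and a continuous action $g \mapsto u_{g}$ of $G$ on $E$ such that the induced actions $\alpha^{u}$ on $A$ and $\beta^{u}$ on $K(E)$ are conjugate to $\alpha$ and $\beta$ respectively. By Remark 4.7, $u$ is automatically a continuous inverse limit action, so we may write $u_{g} = \lim\limits_{\underset{p \in S(A)}{\leftarrow}} u_{g}^{p}$, with each $g \mapsto u_{g}^{p}$ a continuous action of $G$ on the Hilbert $A_{p}$-module $E_{p}$. By Proposition 3.8 the induced actions factor through the inverse system as $\alpha_{g}^{u} = \lim\limits_{\leftarrow} \alpha_{g}^{u^{p}}$ on $A$ and $\beta_{g}^{u} = \lim\limits_{\leftarrow} \beta_{g}^{u^{p}}$ on $K(E)$. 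By Proposition 4.4 (applied to the two conjugacies) the crossed products $G \times_{\alpha} A$, $G \times_{\alpha^{u}} A$ are isomorphic, and similarly $G \times_{\beta} B \cong G \times_{\beta^{u}} K(E)$, so it suffices to work with the $u$-induced actions throughout.

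The second step is to invoke the Combes, Curto--Muhly--Williams theorem at each level. For every $p \in S(A)$, the triple $(E_{p}, u^{p}, \alpha^{u^{p}})$ is a strong Morita equivalence datum in the $C^{*}$-setting: $E_{p}$ is a full Hilbert $A_{p}$-module, $\alpha^{u^{p}}$ is the induced action on $A_{p}$ and $\beta^{u^{p}}$ is the induced action on $K(E_{p})$. By the classical result, $E_{p}^{G}$ is a full Hilbert $G \times_{\alpha^{u^{p}}} A_{p}$-module and there is an isomorphism of $C^{*}$-algebras
\begin{equation*}
\Psi_{p} : K(E_{p}^{G}) \longrightarrow G \times_{\beta^{u^{p}}} K(E_{p})
\end{equation*}
determined on generators by the usual formula relating rank-one operators on $E_{p}^{G}$ to convolution kernels.

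The third step is to pass to the inverse limit. By Corollary 5.4, $E^{G} \cong \lim\limits_{\leftarrow} E_{p}^{G}$ unitarily and $K(E^{G}) \cong \lim\limits_{\leftarrow} K(E_{p}^{G})$ as locally $C^{*}$-algebras. Meanwhile, from the paper's description of the crossed product, $G \times_{\beta^{u}} K(E) \cong \lim\limits_{\leftarrow} G \times_{\beta^{u^{p}}} K(E_{p})$. If I can show that the family $(\Psi_{p})_{p}$ is compatible with the connecting morphisms on both sides, then $\Psi := \lim\limits_{\leftarrow} \Psi_{p}$ is an isomorphism of locally $C^{*}$-algebras $K(E^{G}) \cong G \times_{\beta^{u}} K(E) \cong G \times_{\beta} B$. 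Fullness of $E^{G}$ over $G \times_{\alpha^{u}} A$ follows by the same inverse-limit argument: the linear span of $\langle E^{G}, E^{G}\rangle$ projects, under each canonical morphism to the $p$-th level, onto a set dense in $G \times_{\alpha^{u^{p}}} A_{p}$ by the $C^{*}$-case, and the inverse-limit topology is defined exactly by these projections, so $\langle E^{G}, E^{G}\rangle$ is dense in $G \times_{\alpha^{u}} A$.

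The main obstacle is the compatibility check for the family $(\Psi_{p})_{p}$: concretely, one has to verify that for $p \geq q$ the diagram
\begin{equation*}
(\pi_{pq}^{G\times_{\beta^{u}}K(E)}) \circ \Psi_{p} = \Psi_{q} \circ (\pi_{pq}^{A})_{*}^{\,E^{G}}
\end{equation*}
commutes on the dense subspace of rank-one compacts $\theta_{\widehat{\xi},\widehat{\eta}}$ with $\widehat{\xi},\widehat{\eta} \in C_{c}(G,E_{p})$. This should reduce, via the explicit formula expressing $\Psi_{p}(\theta_{\widehat{\xi},\widehat{\eta}})$ as the convolution kernel $t \mapsto \int_{G} \theta_{\widehat{\xi}(s), u_{t}^{p}\widehat{\eta}(t^{-1}s)} \, ds$, to a direct computation using naturality of $\sigma_{pq}^{E}$ with respect to $u^{p}$, $u^{q}$ established in the proof of Proposition 3.8. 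Once this compatibility is in place, and combined with the fullness argument above, Definition 5.5 is satisfied with $X = E^{G}$, completing the proof.
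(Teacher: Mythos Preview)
Your proposal is correct and follows essentially the same route as the paper's own proof: reduce via Proposition 4.4 to the induced actions $\alpha^{u}$ and $\beta^{u}$, apply the classical Combes/Curto--Muhly--Williams result at each level $p$ to get isomorphisms $K(E_{p}^{G})\cong G\times_{\beta^{u^{p}}}K(E_{p})$, check these form an inverse system, and then use Corollary 5.4 to assemble the full Hilbert module $E^{G}$ and the isomorphism $K(E^{G})\cong G\times_{\beta^{u}}K(E)$. The only slip is that your ``usual formula'' for $\Psi_{p}(\theta_{\widehat{\xi},\widehat{\eta}})(t)$ omits the modular factor $\Delta(t^{-1}s)$ that appears in the paper's $\Phi_{p}$; otherwise the argument matches.
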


\begin{proof}
Suppose that $\alpha \backsim _{E,u}^{s}\beta $. Then, by Proposition 4.4,
the locally $C^{\ast }$-algebras $G\times _{\alpha }A$ and $G\times _{\alpha
^{u}}A$ are isomorphic and so they are strongly Morita equivalent as well as
the locally $C^{\ast }$-algebras $G\times _{\beta }B$ and $G\times _{\beta
^{u}}K(E)$. To proof the theorem it is sufficient to prove that the locally $%
C^{\ast }$-algebras $G\times _{\alpha ^{u}}A$ and $G\times _{\beta ^{u}}K(E)$
are strongly Morita equivalent.

For each $p\in S(A)$, a simple calculus shows that $\alpha ^{u^{p}}\backsim
_{E_{p},u^{p}}^{s}\beta ^{u^{p}}$ and by \cite[Remark pp. 300]{2}, the
Hilbert $G\times _{\alpha ^{u^{p}}}A_{p}$-module $E_{p}^{G}$ implements a
strong Morita equivalence between $G\times _{\alpha ^{u^{p}}}A_{p}$ and $%
G\times _{\beta ^{u^{p}}}K(E_{p})$. Moreover, the linear map $\Phi
_{p}:\Theta (E_{p}^{G})\rightarrow C_{c}(G,K(E_{p}))$ defined by 
\begin{equation*}
\Phi _{p}\left( \theta _{\widehat{\xi _{p}},\widehat{\eta _{p}}}\right)
(t)=\int_{G}\theta _{\widehat{\xi _{p}}(s),\Delta (t^{-1}s)u_{t}^{p}\left( 
\widehat{\eta _{p}}(t^{-1}s)\right) }ds
\end{equation*}%
extends to an isomorphism of $C^{\ast }$-algebras from $K(E_{p}^{G})$ to $%
G\times _{\beta ^{u^{p}}}K(E_{p})$ \cite[Remark pp. 300]{2}.

Let $E^{G}$ be the Hilbert $G\times _{\alpha ^{u}}A$-module constructed
above. From Corollary 5.4 and taking into account that for each $p\in S(A)$,
the Hilbert $G\times _{\alpha ^{u^{p}}}A_{p}$-module $E_{p}^{G}$ is full, we
conclude that $E^{G}$ is full. It is not difficult to check that 
\begin{equation*}
\pi _{pq}^{G\times _{\beta ^{u}}K(E)}\circ \Phi _{p}=\Phi _{q}\circ \left(
\pi _{pq}^{K(E^{G})}\right) _{\ast }
\end{equation*}%
for all \ $p,q\in S(A)$ with $p\geq q$, where \ $\{\pi _{pq}^{G\times
_{\beta ^{u}}K(E)}\}_{p,q\in S(A),p\geq q}$ and $\{\left( \pi
_{pq}^{K(E^{G})}\right) _{\ast }$ $\}_{p,q\in S(A),p\geq q}$ are the
connecting maps of the inverse systems of $C^{\ast }$-algebras $\{G\times
_{\beta ^{u^{p}}}K(E_{p})\}_{p\in S(A)}$ respectively $\{K(E_{p}^{G})\}_{p%
\in S(A)}$. Therefore $\left( \Phi _{p}\right) _{p}$ is an inverse system of 
$C^{\ast }$-isomorphisms, and then the locally $C^{\ast }$-algebras $%
\lim\limits_{\underset{p\in S(A)}{\leftarrow }}K(E_{p}^{G})$ and $%
\lim\limits_{\underset{p\in S(A)}{\leftarrow }}G\times _{\beta
^{u^{p}}}K(E_{p})$ are isomorphic. From this fact, we deduce that the
locally $C^{\ast }$-algebras $K\left( E^{G}\right) $ and $G\times _{\beta
^{u}}K(E)$ are isomorphic and so the locally $C^{\ast }$-algebras $G\times
_{\alpha ^{u}}A$ and $G\times _{\beta ^{u}}K(E)$ are strongly Morita
equivalent.
\end{proof}

\begin{corollary}
Let $G$ be a compact group and let $\alpha $ and $\beta $ be two continuous
actions of $G$ on the locally $C^{\ast }$-algebras $A$ and $B$ such that the
maps $\left( g,a\right) \rightarrow \alpha _{g}(a)$ from $G\times A$ to $A$
and $\left( g,b\right) \rightarrow \beta _{g}(b)$ from $G\times B$ to $B$
are jointly continuous. If the actions $\alpha $ and $\beta $ are strongly
Morita equivalent, then the crossed products $G\times _{\alpha }A$ and $%
G\times _{\beta }B$ are strongly Morita equivalent.
\end{corollary}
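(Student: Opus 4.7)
The plan is to reduce the corollary to Theorem 5.6 by showing that under the stated hypotheses both $\alpha$ and $\beta$ are in fact continuous inverse limit actions; once that is established, Theorem 5.6 applies directly. For each $p\in S(A)$ I would form the averaged seminorm
\[
p^{\ast}(a):=\sup_{g\in G}p(\alpha_{g}(a)),\qquad a\in A,
\]
and argue that $\{p^{\ast}\}_{p\in S(A)}$ is a cofinal family of $G$-invariant continuous $C^{\ast}$-seminorms on $A$, which is precisely the condition for $\alpha$ to be a continuous inverse limit action as recalled in Section~2. The analogous construction handles $\beta$.

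The verification splits into three steps. First, $p^{\ast}(a)$ is finite for every $a$ because joint continuity makes $g\mapsto p(\alpha_{g}(a))$ a continuous real-valued function on the compact group $G$. Second, $p^{\ast}$ is a $C^{\ast}$-seminorm: $\ast$-preservation, subadditivity and submultiplicativity pass to the supremum since each $\alpha_{g}$ is a $\ast$-automorphism, and the $C^{\ast}$-identity
\[
p^{\ast}(a^{\ast}a)=\sup_{g\in G}p\bigl(\alpha_{g}(a)^{\ast}\alpha_{g}(a)\bigr)=\sup_{g\in G}p(\alpha_{g}(a))^{2}=p^{\ast}(a)^{2}
\]
follows from the same identity for $p$. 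Third, $p^{\ast}$ must be shown continuous; since it is a seminorm, it suffices to produce, for each $\varepsilon>0$, a neighborhood $U$ of $0$ in $A$ with $p^{\ast}|_{U}\leq\varepsilon$.

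This last step is the only delicate point and is where both compactness of $G$ and joint (rather than merely separate) continuity of $\alpha$ are genuinely needed. Since $\alpha_{g}(0)=0$, joint continuity at each point $(g,0)\in G\times A$ supplies a product neighborhood $V_{g}\times U_{g}$ on which $p(\alpha_{h}(a))\leq\varepsilon$. Compactness of $G$ lets me extract a finite subcover $V_{g_{1}},\dots,V_{g_{n}}$, and then $U:=\bigcap_{i=1}^{n}U_{g_{i}}$ satisfies $p(\alpha_{h}(a))\leq\varepsilon$ for every $h\in G$ and every $a\in U$, whence $p^{\ast}|_{U}\leq\varepsilon$.

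Once $p^{\ast}\in S(A)$ is established, $G$-invariance follows by a trivial change of variables and $p^{\ast}\geq p$ (take $g=e$) gives cofinality, so $\alpha$ is a continuous inverse limit action; the same reasoning makes $\beta$ one. Theorem~5.6 then yields the strong Morita equivalence of $G\times_{\alpha}A$ and $G\times_{\beta}B$, completing the proof. I expect the tube argument for continuity of $p^{\ast}$ to be the main (really the only) obstacle; the remaining steps are bookkeeping.
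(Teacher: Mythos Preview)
Your proposal is correct and follows the same route as the paper: show that the actions are continuous inverse limit actions and then invoke Theorem~5.6. The paper's proof simply cites Phillips~\cite{15} for the fact that a jointly continuous action of a compact group is an inverse limit action, whereas you supply the averaged-seminorm construction and the tube argument explicitly; your details are sound and amount to unpacking that citation.
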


\begin{proof}
Since the group $G$ is compact, the actions $\alpha $ and $\beta $ of $G$ on 
$A$ and $B$ are continuous inverse limit actions \cite{15} and apply Theorem
5.6.
\end{proof}

Department of Mathematics, Faculty of Chemistry, University of Bucharest,
Bd. Regina Elisabeta nr. 4-12, Bucharest, Romania

mjoita@fmi.unibuc.ro

\end{document}